\numberwithin{equation}{section}
\newtheorem{thm}{Theorem}[section]
\newtheorem{prop}[thm]{Proposition}
\newtheorem{lem}[thm]{Lemma}
\theoremstyle{remark}
\theoremstyle{definition}
\newcommand{\suchthat}{\;\ifnum\currentgrouptype=16 \middle\fi|\;}
\newcommand{\Gnorm}[1]{\left\lVert#1\right\rVert}
\newtheorem{rem}[thm]{Remark}
\begin{document}
 \title[]{Orthogonal gamma-based expansion for the CIR's first passage time distribution} 
\author[E. Di Nardo]{Elvira Di Nardo$^{\ast}$}
\address{$^{\ast}$ Dipartimento di Matematica \lq\lq G. Peano\rq\rq, Università degli Studi di Torino, Via Carlo Alberto 10, 10123 Torino, Italy}
\email{elvira.dinardo@unito.it }
\author[G. D'Onofrio]{Giuseppe D'Onofrio$^{\dagger}$}
\address{$^{\dagger}$ Dipartimento di Scienze Matematiche, Politecnico di Torino,  10129 Torino, Italy}
\email{giuseppe.donofrio@polito.it}
\author[T. Martini]{Tommaso Martini$^{\ast\ast}$}
\address{$^{\ast\ast}$ Dipartimento di Matematica \lq\lq G. Peano\rq\rq, Università degli Studi di Torino, Via Carlo Alberto 10, 10123 Torino, Italy}
\email{tommaso.martini@unito.it }            \pagestyle{myheadings}  \maketitle
\begin{abstract}
In this paper we analyze a method for approximating the first-passage time density and the corresponding distribution function for a CIR process. This approximation is obtained by truncating a series expansion involving the generalized Laguerre polynomials and the gamma probability density.  The suggested approach involves a number of numerical issues  which depend strongly on the  coefficient of variation of the first passage time random variable. These issues are examined and solutions are proposed also involving the first passage time  distribution function. Numerical results and comparisons
with alternative approximation methods  show the strengths and
weaknesses of the proposed method. A general acceptance-rejection-like procedure, that makes use of the approximation, is presented. It allows the generation of first passage time data, even if  its distribution is
unknown.
\end{abstract}
\noindent {\small {\bf keywords: }{Feller square-root process, hitting times, Fourier series expansion, cumulants, Laguerre polynomials, acceptance-rejection method }} \\
{\small {\bf 2020 MSC:} 65C20, 60G07, 62E17, 42C10, 60-08 }

\section{\label{sec:level1}
Introduction }

In many applications spanning from finance to engineering including, among others, computational neuroscience, mathematical biology and reliability theory (see \cite{redner} for a thorough exposition) the dynamics of a noisy system is described by a stochastic process $Y(t)$ evolving in the presence of a threshold $S(t).$
The first-passage-time (FPT) problem consists in finding the distribution of the random variable (rv) 
$T$, defined by
\begin{equation}\label{12}
T=\begin{cases} \inf_{t\geq \tau}\{Y(t)>S(t)\},& Y(\tau)=y_{\tau}<S(\tau),\\
\inf_{t\geq \tau}\{Y(t)<S(t)\},& Y(\tau)=y_{\tau}>S(\tau),\\
\end{cases}
\end{equation}
representing the time the process $Y(t)$ crosses the threshold $S(t)$ for the first time.
Although classical and very easy to state, the solution in closed form of this problem is available only in a very few cases, depending on the properties of both $Y(t)$ and $S(t).$ In this paper, we address the FPT problem of a Cox-Ingersoll-Ross (CIR) process $Y(t)$ through a constant threshold $S$. This one-dimensional diffusion process, belonging to the class of Pearson diffusions \cite{forman2008}, is frequently involved in the field of mathematical finance starting from the seminal paper \cite{CIR} by which it is commonly called nowadays. Outside this community the process is often called square-root, due to the form of its diffusion coefficient (or volatility), or, due to historical reasons, Feller process from the $1951$ paper, in which the process is introduced for the first time \cite{fel51}. 

When considering the FPT problem of a CIR process, the literature, also quite recent, is vast but the results are partial and fragmentary,
see for instance
\cite{ascione2023ergodicity}, \cite{deaconu2013hitting}, 
\cite{gerhold2020running}, \cite{giorno1988}, \cite{giorno2021first},  \cite{going2003survey}, \cite{Linetsky}, \cite{martin2011first}, \cite{song2016first}, and \cite{masoliver2012first} for a thorough review of the state of art.
Exploiting the Laplace transform of the FPT pdf and the theory of formal power series, a closed form expression of its FPT cumulants is given in \cite{di2021cumulant}. In the same paper, under appropriate assumptions, the FPT pdf has been expanded in series  of generalized Laguerre polynomials, involving moments computed from cumulants and weighted by a gamma pdf. The idea of approximating a pdf by truncating a suitable series expansion is not new. Indeed 
such an approximation is of the Gram-Charlier type with a gamma distribution rather than a normal distribution as reference (parent) distribution and with generalized Laguerre polynomials instead of Hermite polynomials as multipliers. 
In \cite{provost2016distribution} 
a general methodology to approximate a pdf based on the knowledge of its moments is introduced, using the product
of a suitable weight function, as parent distribution, and a suitable family of associated orthogonal polynomials, as multipliers. 
One of the main issues of this approximation is that negative values can occur, although the approximate density always entails a unit area. This happens even with the Gram-Charlier series having Gaussian parent distribution. To overcome this drawback, two approaches can be found in the literature. The first one is to use the approximation with a low truncation order and to find constrained regions on the values
of the cumulants (or moments) that admit a valid (non-negative) pdf. The suggested truncation is mostly at the fourth-order term because it becomes difficult to manage  valid regions for higher orders. Within the FPT framework, this approach was used to approximate the FPT pdf of an Ornstein-Uhlenbeck process 
\cite{smith1991laguerre}. In 
particular, the restrictions on the first four moments that guarantee the non-negativity of the approximated density    are outlined and discussed in \cite{lung1998approximations}, along with a thorough examination of when to apply this approximation. Indeed, with this low truncation order, the approximated density may fail to be close to the theoretical one, especially for distributions that are not sufficiently close to
the parent distribution. In 
the case of Gaussian parent distribution and for arbitrary even order, the valid region of cumulants has been found numerically through a semi-definite algorithm 
\cite{lin2022valid}.  A second way of tackling this issue consists in replacing values of a suitable positive interpolating function to the negative ones assumed by the approximated pdf. In \cite{Wil73}, as interpolating function for pdfs with support $(0,\infty),$ a second-degree polynomial is suggested in a right-handed neighborhood of the origin. In this paper, taking into account that  the FPT pdfs are unimodal for diffusion processes \cite{Uwe80}, this second approach is 
developed along a different direction and for the first time within the FPT framework.
Firstly, sufficient conditions are given on the sign of the coefficients of the series expansion so that the approximated density may hold non-negative values on the tails and in a right-handed neighborhood of the origin. 
Secondly, if there are additional intervals in which the approximated pdf turns out to be negative, an appropriate correction is proposed that takes into account  sufficient conditions given in \cite{di2021cumulant} allowing
the Laguerre-Gamma type expansion for the FPT pdf. The issue concerning the possible negative values of the approximated pdf can be overcome also by considering the FPT cumulative distribution function (cdf). For this reason, in parallel with our discussion, we develop the method for the approximation of the cdf as well. We stress that this approach, new in the FPT context, has some numerical advantages
and allows an easier approximation of quantiles. 

Those highlighted so far are not the only issues concerning the use of such an approximation. The choice of the gamma pdf parameters as well as the order of truncation of the series are additional issues that may affect the quality of the approximation. These issues, only briefly sketched in \cite{provost2016distribution}, are considered in detail in this paper. For example, the key role played by the coefficient of variation in the choice of the gamma pdf parameters is shown, while the truncation order is controlled by appropriate stopping criteria. 

The proposed Laguerre-Gamma expansion has an additional advantage since 
density estimates can be produced based on sample moments. Indeed, if the
FPT moments/cumulants are not known, this approach allows 
to recover an approximation of the FPT pdf starting from a sample of FPT data.  
These estimators are known in the literature as orthogonal series estimators and can be very competitive when compared with the classical density estimators such as the kernel density estimator (KDE) or the histogram \cite{hall1980estimating}.

Thanks to obtained evaluations of the approximation error, an acceptance-rejection-like method,  that makes use of the series expansion, is finally proposed. 
It allows the generation of FPT data, even if its distribution is unknown, and can be applied to a wide class of pdfs. Note that, 
although never used for the CIR process, acceptance-rejection methods had already appeared in the FPT context (see for instance \cite{herrmann2020exact} 
and \cite{mijatovic2015randomisation}), but the approximation strategy here proposed is a novelty.
This method is particularly useful since exact simulation techniques for CIR sample paths are not available, and the existing ones, based on discretization methods or transition densities, exhibit  large computational costs if the fixed time step is small.  

The paper is organized as follows. 
In Section \ref{section2} we resume the FPT problem for the CIR process recalling the known results useful for carrying out the proposed approximation.
In Section \ref{section3} we discuss
the convergence of the method. Moreover we address some theoretical issues closely related to the approximation such as the choice of the truncation order.
The role played by the coefficient of variation of the FPT rv in the choice of the gamma pdf parameters is also discussed.  Section \ref{section4} suggests how to overcome the two main computational issues arising in dealing with such an approximation: the monotonicity of the approximated cdf and the positivity of the approximated pdf. Numerical results and comparisons with alternative approximation methods 
are given in Section \ref{section5} aiming to discuss  the strengths and weaknesses of the proposed approach. We  set three different choices of the CIR process parameters and boundaries that corresponds to different forms and statistical properties of the FPT pdf.  An application of the Laguerre-Gamma approximation is shown in the last section, which involves sampling FPTs using a technique analogous to the acceptance-rejection method. Concluding remarks close the paper.
\section{The CIR process and the FPT problem} \label{section2}
The CIR process we refer to is the unique strong solution of the stochastic differential equation \cite{fel51} 
\begin{equation}
\label{eqn: defn feller}
dY(t) = (-\tau Y(t) + \mu) \, {\rm d}t + \sigma \sqrt{Y(t) - c}\,  {\rm d}W(t),
\end{equation}
where $W(t)$ is a standard Brownian motion, $c\le 0$, $\tau > 0$, $\mu \in \mathbb{R}$, $\sigma > 0$ and $Y_0 = y_0$. The state space of the process is the interval $(c,+\infty)$. The endpoints $c$ and $+\infty$ can or cannot be reached in a finite time, depending on the underlying parameters. According to the Feller classification of boundaries \cite{karlin1981second}, $c$ is an entrance boundary if it cannot be reached by $Y(t)$ in finite time, and there is no probability flow to the outside of the interval $(c,+\infty)$.
In particular,
$$ c \,\,\, \hbox{\rm is an entrance boundary if}
\,\,\, s :=2(\mu-c \tau) / \sigma^{2} \geq 1.$$
This will be a standing assumption in the following.
\par
Denote with $g(t)=\frac{d}{d t}\mathbb{P}\{T \leq t\}$ the pdf of the FPT rv $T$ as defined in (\ref{12}). Its Laplace transform $\widetilde{g}(z)$ is such that
$\widetilde{g}(z)=1$ if $y_{0} \equiv S$ and $\widetilde{g}(z)<+\infty$ for any different $y_{0}$
\cite{masoliver2012first}.
Its closed form expression is \cite{d2018two}
\begin{equation}
\widetilde{g}(z)=\frac{\Phi\left(\frac{z}{\tau}, s, \frac{2 \tau\left(y_{0}-c\right)}{\sigma^{2}}\right)}{\Phi\left(\frac{z}{\tau}, s, \frac{2 \tau(S-c)}{\sigma^{2}}\right)}, \quad z>0
\label{(LT)}
\end{equation}
where $\Phi(a, b, z)={ }_{1} F_{1}(a ; b ; z)$ is the confluent hypergeometric function of the first kind (or Kummer's function). The Laplace transform \eqref{(LT)} cannot be inverted explicitly, except for the case $S=0$, see for instance \cite{martin2011first}, but information on the moments can be obtained by direct derivation or from cumulants as described in the next subsection.

\subsection{FPT cumulants and moments} 
Recall that, if $T$ has moment generating function  $\mathbb E[e^{zT}] < \infty$ for all $z$ in an open interval about $0,$ then its cumulants $\{c_k(T)\}_{k \geq 1}$ are such that 
\begin{equation*}
\sum_{k \geq 1} c_k(T) 
\frac{z^k}{k!}  = \log \mathbb E[e^{zT}] 
\label{defcum}
\end{equation*}
for all $z$ in some (possibly smaller) open interval about $0.$ 
Using the logarithmic polynomials\footnote{See Appendix for their definition.} $\{P_{k}\}$, the FPT cumulants of the 
CIR process can be expressed as  \cite{di2021cumulant}  
$$c_{k}(T)=(-\tau)^{-k}\left[c_{k}^{*}\left(y_{0}\right)-c_{k}^{*}(S)\right], \,\,\,k \geq 1 $$ 
where
\begin{equation}
c_{k}^{*}(w)=P_{k}\left[h_{1}\left(\frac{2 \tau(w-c)}{\sigma^{2}}\right), h_{2}\left(\frac{2 \tau(w-c)}{\sigma^{2}}\right), \ldots, h_{k}\left(\frac{2 \tau(w-c)}{\sigma^{2}}\right)\right],
\label{cumulants}
\end{equation}
with $
h_{j}(y)=j ! \sum_{n \geq j}\left[\begin{array}{c}
n \\
j
\end{array}\right] \frac{y^{n}}{n !\langle s\rangle_{n}},$ for $j=1,2, \ldots, k,$ $\left[\begin{array}{l}n \\ j\end{array}\right]$ the unsigned Stirling numbers of first type and $\langle\cdot\rangle_{n}$ the $n$-th rising factorial.

FPT moments of the CIR process are obtained from cumulants \cite{di2021cumulant} using the complete Bell polynomials\footnote{See Appendix for their definition.} $\left\{Y_{k}\right\}$ and $\{c^*_k\}$ given in \eqref{cumulants}, that is
\begin{equation}
\mathbb{E}\left[T^{k}\right]=\frac{(-1)^{k}}{\tau^{k}} \sum_{i=0}^{k}\left(\begin{array}{c}
k \\
i
\end{array}\right) Y_{k-i}\left[c_{1}^{*}\left(y_{0}\right), \ldots, c_{k-i}^{*}\left(y_{0}\right)\right] Y_{i}\left[-c_{1}^{*}(S), \ldots,-c_{i}^{*}(S)\right]
\label{(momT)}
\end{equation}
for $k \geq 1.$ An alternative way to compute moments from cumulants is the well-known recursion formula \cite{DiNardo2006}
\begin{equation}
\mathbb{E}\left[T^{k}\right]=c_{k}(T)+\sum_{i=1}^{k-1}\left(\begin{array}{c}
k-1 \\
i-1
\end{array}\right) c_{i}(T) \mathbb{E}\left[T^{k-i}\right].
\label{eq:recursionmoments}
\end{equation}
This formula is particularly convenient from a computational point of view and has been used to recover FPT moments from the knowledge of cumulants.

\subsection{The FPT pdf and cdf}\label{section3}
Under suitable hypotheses, a closed form expression of the FPT pdf has been given in \cite{di2021cumulant} using the moments $\{\mathbb{E}\left[T^{k}\right]\}.$ Indeed, suppose
\begin{equation}
\label{perprop31}
f_{\alpha,\beta}(t) = \beta (\beta t)^{\alpha} \frac{e^{-\beta t}}{ \Gamma(\alpha+1)}, \,\, t > 0
\end{equation}
the gamma pdf with scale parameter $\alpha+1>0$ and shape parameter $\beta>0.$  For $\alpha > -1,$ let the polynomial sequence
$\{Q_k^{(\alpha)}(t)\}_{k \geq 0}$ be defined as
\begin{equation}
Q_k^{(\alpha)}(t) = (-1)^k \left( \frac{\Gamma(\alpha+1+k)}{k! \, \Gamma(\alpha+1)}\right)^{-1/2} \!\!\! {L}_k^{(\alpha)}(t),
\label{(orthonormale)}
\end{equation}
where ${L}_k^{(\alpha)}(t)$ is the $k$-th generalized Laguerre polynomial 
\begin{equation*}
L_k^{(\alpha)}(t) = \sum_{i=0}^k 
\binom{k+\alpha}{k-i} \frac{(-t)^i}{i!}, \,\,\,\, k \geq 1
\end{equation*}
with $L_0^{(\alpha)}(t) = 1.$  For any fixed $t > 0,$ the FPT pdf admits the following expansion (see Theorem $2$ in \cite{di2021cumulant}):
\begin{equation}
g(t) = \frac{\beta (\beta t)^{\alpha} e^{- \beta t}}{\Gamma(\alpha+1)} \sum_{k \geq 0} a_k^{(\alpha)}{Q}_k^{(\alpha)}(\beta t), 
\label{approximationgen2}
\end{equation}
where $a_k^{(\alpha)}={\mathbb E}[Q_k^{(\alpha)}(\beta T)]$ for  $k \geq 0.$ 
\begin{rem}
{\rm If 
\begin{equation*}
\int_0^{\infty} t^{-\alpha} e^{\beta t} [g(t)]^2 < \infty \qquad  {\rm then}
\qquad \frac{g(t)}{f_{\alpha,\beta}(t)} \in {\mathcal L}^2(\nu),
\end{equation*}
where ${\mathcal L}^2(\nu)$ is the Hilbert space of the square-integrable functions with respect to the measure $\nu$ having density $f_{\alpha,\beta}(t).$ Therefore,  
\eqref{approximationgen2} represents the Fourier-Laguerre series expansion of $g(t)/f_{\alpha,\beta}(t)$ in terms of the complete orthonormal sequences $\{{Q}_k^{(\alpha)}(t)\}.$}
\end{rem}
Some algebra allows us to write the FPT pdf $g(t)$ in (\ref{approximationgen2}) as
\begin{equation}
g(t) = \beta (\beta t)^{\alpha} e^{- \beta t}  \sum_{k \geq 0} {\mathcal B}_k^{(\alpha)}{L}_k^{(\alpha)}(\beta t), \,\, t > 0
\label{approximationgen3}
\end{equation}
with coefficients ${\mathcal B}_0^{(\alpha)} = 1$ and
\begin{equation}
{\mathcal B}_k^{(\alpha)} = (-1)^k a_k^{(\alpha)} \left( \frac{\Gamma(\alpha+1) \Gamma(\alpha+1+k)}{k!} \right)^{-1/2} = \sum_{j=0}^k \binom{k}{j} \frac{(-\beta)^j {\mathbb E}(T^j)}{\Gamma(\alpha+j+1)},
\label{(coeffak1)}
\end{equation}
depending on the moments of $T.$ By recalling that
$$\int \tau^{\alpha} e^{-\tau} L_k^{(\alpha)}(\tau) \, {\rm d} \tau = \frac{\tau^{\alpha+1} \Gamma(\alpha + k + 1)}{k!} \frac{\Phi(\alpha+k+1 ; \alpha + 2 ; - \tau)}{\Gamma(\alpha + 2)}$$
where $\Phi(a, b, z)={ }_{1} F_{1}(a ; b ; z)$ is the confluent hypergeometric function of the first kind,
and using \eqref{approximationgen3}, a closed form expression of the FPT cdf is given in the following statement. 
\begin{prop}
The FPT cdf $G(t)$ is 
\begin{equation*}
    G(t)= \frac{(\beta t)^{\alpha+1}}{\Gamma(\alpha+2)} \sum_{k \geq 0}    \frac{\Gamma(\alpha+k+1)}{k!} \, {\mathcal B}_k^{(\alpha)} \, \Phi(\alpha+k+1, \alpha+2, -\beta t), \,\,\,\,\, t > 0.
\end{equation*}    
\end{prop}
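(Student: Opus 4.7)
The plan is to start from the series representation \eqref{approximationgen3} of the pdf $g(t)$ and integrate term by term from $0$ to $t$, then apply the explicit antiderivative formula
$$\int \tau^{\alpha} e^{-\tau} L_k^{(\alpha)}(\tau) \, {\rm d}\tau = \frac{\tau^{\alpha+1} \Gamma(\alpha+k+1)}{k!} \frac{\Phi(\alpha+k+1; \alpha+2; -\tau)}{\Gamma(\alpha+2)}$$
recalled just before the proposition.

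First I would write $G(t) = \int_0^t g(u)\, {\rm d}u$, perform the change of variable $\tau = \beta u$, and obtain
$$G(t) = \int_0^{\beta t} \tau^\alpha e^{-\tau} \sum_{k\ge 0} \mathcal{B}_k^{(\alpha)} L_k^{(\alpha)}(\tau)\, {\rm d}\tau.$$
Next I would interchange the summation and the integral, producing $\sum_{k\ge 0} \mathcal{B}_k^{(\alpha)} \int_0^{\beta t} \tau^\alpha e^{-\tau} L_k^{(\alpha)}(\tau)\, {\rm d}\tau$, and apply the antiderivative formula. Evaluating at the upper limit $\beta t$ and at the lower limit $0$: the boundary contribution at $\tau = 0$ vanishes because $\tau^{\alpha+1} \to 0$ as $\tau \to 0^+$ (recall the standing assumption $\alpha > -1$ that accompanies \eqref{perprop31}), while the upper limit gives precisely the stated expression, yielding
$$G(t) = \frac{(\beta t)^{\alpha+1}}{\Gamma(\alpha+2)} \sum_{k \geq 0} \frac{\Gamma(\alpha+k+1)}{k!}\, \mathcal{B}_k^{(\alpha)}\, \Phi(\alpha+k+1, \alpha+2, -\beta t).$$

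The main obstacle is justifying the interchange of sum and integral, since the absolute convergence of the Fourier--Laguerre series in \eqref{approximationgen3} is only $\mathcal{L}^2(\nu)$-type (as noted in the Remark). One can handle this either by invoking the hypotheses of Theorem $2$ of \cite{di2021cumulant} under which the expansion \eqref{approximationgen2} holds pointwise and in $L^1$, so that dominated or monotone convergence applies on the bounded interval $[0,\beta t]$, or by observing that partial sums $g_N(t)$ of \eqref{approximationgen3} converge to $g$ in $L^1(0,\infty)$ and hence their antiderivatives converge uniformly to $G$. Once the interchange is legitimate, the proof reduces to the substitution and evaluation steps above, which are entirely routine.
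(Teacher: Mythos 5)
Your proposal is correct and follows exactly the route the paper intends: the paper states the antiderivative formula for $\tau^{\alpha}e^{-\tau}L_k^{(\alpha)}(\tau)$ immediately before the proposition and derives the cdf by term-by-term integration of \eqref{approximationgen3}, which is precisely your substitution-and-evaluate argument. Your additional remarks on justifying the interchange of sum and integral go beyond what the paper records (it offers no explicit proof), but they address a real gap rather than introducing a different method.
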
 
\section{\label{section3}
The FPT approximation}
An approximation of the FPT pdf can be recovered from \eqref{approximationgen2} by using a truncation of the series up to an order $n$
\begin{equation}
\hat{g}_n(t) = \frac{\beta (\beta t)^{\alpha} e^{- \beta t}}{\Gamma(\alpha+1)} \sum_{k = 0}^n a_k^{(\alpha)}{Q}_k^{(\alpha)}(\beta t), \,\,\,\, t > 0.
\label{approximationgen2bis}
\end{equation}
The higher is the order $n$ the better should be the approximation. Indeed 
the ${\mathcal L}^2(\nu)$-error in replacing $g(t)$ with its approximation $\hat{g}_n(t)$ given in \eqref{approximationgen2bis} is\cite{MR1563024} 
\begin{equation}
\label{errorPars}
\Gnorm{\frac{g-\hat{g}_n}{f_{\alpha,\beta}}}_{\alpha,\beta}  = \left[\sum_{k \geq n+1} \big(a_k^{(\alpha)}\big)^2 \right]^{1/2}
\end{equation}
where $\Gnorm{\,\,}_{\alpha,\beta}$ denotes 
the norm in ${\mathcal L}^2(\nu).$ 
Thus the error may be estimated by calculating the rate of decrease of $a_k^{(\alpha)}$
when $k \rightarrow \infty.$ The latter is given in the following proposition.
\begin{thm}\label{thm:a_k}
Assume the FPT pdf $g(t) \in C^2[0,+\infty).$  If
\begin{equation*} 
\beta < \frac{2}{\mathbb{E}[T]} \quad \hbox{\rm and} 
\quad g(t) = o(t^{\delta}) \quad \hbox{\rm for} \,\,\, t \rightarrow 0, \quad \hbox{\rm with} \,\, \, \delta > \frac{\alpha}{2}+1
\end{equation*}
then in \eqref{errorPars} $a_k^{(\alpha)}  = {\mathbb E}[Q_k^{(\alpha)}(\beta T)] =  O(k^{-1})$ as $k \rightarrow \infty.$
\end{thm}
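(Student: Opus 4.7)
The plan is to exploit that $a_k^{(\alpha)}=\mathbb{E}[Q_k^{(\alpha)}(\beta T)]$ is, up to normalization, the $k$-th Fourier-Laguerre coefficient of $g/f_{\alpha,\beta}$, and to extract the $1/k$ decay from two integrations by parts, in the classical ``smoothness implies decay'' spirit. After the rescaling $u=\beta t$,
\begin{equation*}
a_k^{(\alpha)}=(-1)^k\left(\frac{\Gamma(\alpha+1+k)}{k!\,\Gamma(\alpha+1)}\right)^{-1/2}\int_0^\infty L_k^{(\alpha)}(u)\,\tilde g(u)\,du,\qquad \tilde g(u):=g(u/\beta)/\beta.
\end{equation*}
The Rodrigues-derived identity $u^\alpha e^{-u}L_k^{(\alpha)}(u)=\tfrac{1}{k}\tfrac{d}{du}[u^{\alpha+1}e^{-u}L_{k-1}^{(\alpha+1)}(u)]$, valid for $k\geq 1$, is the lever that converts one factor of $1/k$ into a gain at each integration by parts.

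Setting $\Psi(u):=\Gamma(\alpha+1)\,\tilde g(u)\,u^{-\alpha}e^u$ and applying the identity with parameter $\alpha$ and then again with $\alpha+1$, two integrations by parts produce
\begin{equation*}
\int_0^\infty L_k^{(\alpha)}(u)\,\tilde g(u)\,du=\frac{1}{k(k-1)\,\Gamma(\alpha+1)}\int_0^\infty \Psi''(u)\,u^{\alpha+2}e^{-u}L_{k-2}^{(\alpha+2)}(u)\,du.
\end{equation*}
Cauchy-Schwarz against the weight $u^{\alpha+2}e^{-u}$, combined with the classical normalization $\int_0^\infty (L_{k-2}^{(\alpha+2)})^2\,u^{\alpha+2}e^{-u}\,du=\Gamma(k+\alpha+1)/(k-2)!$, bounds the right-hand side by a constant multiple of $\sqrt{\Gamma(k+\alpha+1)/(k-2)!}\,/\,[k(k-1)]$. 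Multiplying by the normalizing prefactor $\sqrt{k!\,\Gamma(\alpha+1)/\Gamma(k+\alpha+1)}$ and using $k!/(k-2)!=k(k-1)$ collapses the gamma ratios and yields $|a_k^{(\alpha)}|\leq C/\sqrt{k(k-1)}=O(k^{-1})$.

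What remains is to justify the manipulations. The $C^2[0,+\infty)$ hypothesis on $g$ makes $\Psi$ itself $C^2$, so that both derivatives exist. The vanishing of the boundary terms at $u=0$ in each integration by parts reduces to checking that $u\,\tilde g(u)\,L_{k-1}^{(\alpha+1)}(u)$ and $u^{\alpha+2}e^{-u}\,\Psi'(u)\,L_{k-2}^{(\alpha+2)}(u)$ tend to zero, which follows from $g(t)=o(t^\delta)$ with $\delta>\alpha/2+1$; at $u=\infty$ the exponential decay of $\tilde g$ (an implicit prerequisite of the cumulant framework, since the moment generating function of $T$ is finite on a neighbourhood of the origin) dominates the polynomial growth of the Laguerre factors. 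The main obstacle is the finiteness of the constant $C=\|\Psi''\|_{L^2(u^{\alpha+2}e^{-u}du)}$: near $0$ the threshold $\delta>\alpha/2+1$ is exactly what is needed to tame the singularity of order $u^{\delta-\alpha-2}$ that $\Psi''$ can exhibit, while near $\infty$ the factor $e^{2u}$ inside $|\Psi''|^2$ must be absorbed by the tail of $\tilde g$, and this is precisely what the hypothesis $\beta<2/\mathbb{E}[T]$ is designed to secure by forcing the rescaled density $\tilde g$ to decay faster than $e^{-u/2}$. This integrability check is where all the hypotheses of the theorem conspire, and is where the careful accounting is needed.
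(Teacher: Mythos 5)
Your proof is correct and follows essentially the same route as the paper's: two integrations by parts that transfer derivatives onto the (exponentially reweighted) density, followed by Cauchy--Schwarz, with the hypothesis $g(t)=o(t^{\delta})$, $\delta>\alpha/2+1$, controlling the boundary terms and the integrability of the second-derivative factor at the origin, and $\beta<2/\mathbb{E}[T]$ controlling it at infinity --- exactly where the paper uses them. The only cosmetic difference is that you apply the ladder identity $u^{\alpha}e^{-u}L_k^{(\alpha)}(u)=\tfrac{1}{k}\tfrac{d}{du}\bigl[u^{\alpha+1}e^{-u}L_{k-1}^{(\alpha+1)}(u)\bigr]$ twice with a parameter shift, so that the $O(k^{-1})$ rate emerges from $\tfrac{1}{k(k-1)}\cdot\sqrt{k(k-1)}$ after normalizing, whereas the paper invokes the equivalent Sturm--Liouville equation and extracts the single factor $1/k$ directly from the eigenvalue together with the orthonormality of $Q_k^{(\alpha)}$.
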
 
\begin{proof}
Observe that $a_k^{(\alpha)} ={\mathbb E}[Q_k^{(\alpha)}(\beta T)]$ gives 
\begin{equation}
a_k^{(\alpha)} = \frac{1}{\beta} \int_0^{\infty} Q_k^{(\alpha)}(t) \, \tilde{g}_{\alpha,\beta}(t) \, t^{\alpha} \, e^{-t} \, {\rm d}t \quad {\rm where} \quad \tilde{g}_{\alpha,\beta}(t) = \frac{g(t/\beta)}{t^{\alpha} \, e^{-t}}.
\label{akuno}
\end{equation}
As the generalized Laguerre polynomials $\{L_k^{(\alpha)}(t)\}$ are eigenfunctions of a Sturm-Liouville problem \cite{aminataeirational} with associated eigenvalues $\lambda_k = k$
$$ \frac{{\rm d}}{{\rm d}t} \left(t^{\alpha+1} e^{-t} y^{\prime} \right) + k \,t^{\alpha} \, e^{-t} \, y = 0 \quad {\rm with} \quad y=y(t), \, k \geq 1$$
the same happens for the linearly transformed polynomials $\{Q_k^{(\alpha)}(t)\}$ in \eqref{(orthonormale)}. Therefore in \eqref{akuno}, replace $$Q_k^{(\alpha)}(t) \, t^{\alpha} \, e^{-t} \quad {\rm with} \quad - \frac{1}{k}  \frac{{\rm d}}{{\rm d}t} \left(t^{\alpha+1} e^{-t} y^{\prime} \right).$$
Integrating by parts the integral in \eqref{akuno} and neglecting the constants, the rhs of \eqref{akuno} reads
\begin{equation}
a_k^{(\alpha)} \approx \frac{1}{k} \int_0^{\infty}
 t^{\alpha+1} \, e^{-t}
 \frac{{\rm d}}{{\rm d}t} [Q_k^{(\alpha)}(t)] 
 \frac{{\rm d}}{{\rm d}t}  [\tilde{g}_{\alpha,\beta}(t)] \, {\rm d}t.
\label{akdue}
\end{equation}
Indeed we have
\begin{equation}
\lim_{t \rightarrow 0} \,\, \tilde{g}_{\alpha,\beta}(t) \, t^{\alpha+1} \, e^{-t} \, \frac{{\rm d}}{{\rm d}t} [Q_k^{(\alpha)}(t)] = 0 \quad {\rm and} \quad \lim_{t \rightarrow \infty} \,\,\tilde{g}_{\alpha,\beta}(t) \, t^{\alpha+1} \, e^{-t} \, \frac{{\rm d}}{{\rm d}t} [Q_k^{(\alpha)}(t)] = 0.
\label{limituno}
\end{equation}
The first limit in \eqref{limituno}
results  by the hypothesis $g(t)=o(t^{\delta})$ for $t \rightarrow 0.$ The second limit in \eqref{limituno} follows by taking into account 
that the FPT pdf of one-dimensional diffusion processes with steady-state distribution is known to be approximately exponential  for $t \rightarrow \infty$, with parameter ${\mathbb E}[T]^{-1}$ 
\cite{nobile_ricciardi_sacerdote_1986}. 
Integrating by parts the integral in \eqref{akdue} and neglecting the constants, the rhs of \eqref{akdue} reads
\begin{equation}
a_k^{(\alpha)} \approx - \frac{1}{k} \int_0^{\infty}
Q_k^{(\alpha)}(t)  \frac{{\rm d}}{{\rm d}t}  \left(t^{\alpha+1} \, e^{-t}
 \frac{{\rm d}}{{\rm d}t}  [\tilde{g}_{\alpha,\beta}(t)] \right) \, {\rm d}t,
\label{aktre}
\end{equation}
where similar considerations done for \eqref{limituno} apply for recovering 
$$\lim_{t \rightarrow 0} \,\,  \, t^{\alpha+1} \, e^{-t} Q_k^{(\alpha)}(t)\, \frac{{\rm d}}{{\rm d}t} [\tilde{g}_{\alpha,\beta}(t)] = 0 \quad {\rm and} \quad \lim_{t \rightarrow \infty} \,\, t^{\alpha+1} \, e^{-t} Q_k^{(\alpha)}(t)\, \frac{{\rm d}}{{\rm d}t} [\tilde{g}_{\alpha,\beta}(t)] = 0.$$
Now, in \eqref{aktre} set
$$h(t) = \frac{1}{w(t)} \frac{{\rm d}}{{\rm d}t}  \left(t^{\alpha+1} \, e^{-t}
 \frac{{\rm d}}{{\rm d}t}  [\tilde{g}_{\alpha,\beta}(t)] \right) \quad {\rm with} \quad w(t)=t^{\alpha} e^{-t}.$$
Applying the Cauchy-Schwarz inequality to the rhs of \eqref{aktre}, we get
$$ \left| \int_0^{\infty}
\sqrt{w(t)} Q_k^{(\alpha)}(t)  h(t) \sqrt{w(t)} \, {\rm d}t \right|^2 \leq 
\left( \int_0^{\infty}
w(t) [Q_k^{(\alpha)}(t)]^2 
\, {\rm d}t \right) \left( \int_0^{\infty}
 [h(t)]^2 w(t) 
\, {\rm d}t  \right)$$
which is finite and not depending on the order $k,$ if the same is true for both integrals on the lhs. Observe that the first integral corresponds to the orthonormality condition of $Q_k^{(\alpha)}(t)$ and so it is finite and not depending on $k.$  The second integral does not depend on $k$ and is finite if the integrand is smooth and the limits for $t\rightarrow 0$
and $t \rightarrow \infty$ are finite. 
Note that
$$\frac{h(t)}{\sqrt{w(t)}} = \frac{(\alpha+1-t) \frac{{\rm d}}{{\rm d}t}[\tilde{g}_{\alpha,\beta}(t)]  + t \frac{{\rm d}^2}{{\rm d}t^2} [\tilde{g}_{\alpha,\beta}(t)]}{t^{-\alpha/2} \, e^{t/2}}.$$
Thus for $t \rightarrow \infty$
and $\beta < 2 /{\mathbb E}[T]$ we have
\begin{equation*}
\lim_{t \rightarrow \infty} \,\, \frac{h(t)}{\sqrt{w(t)}} =  \lim_{t \rightarrow \infty} 
\frac{e^{z[1-(\beta {\mathbb E}[t])^{-1}]}}{z^{\alpha/2} e^{z/2}} + \lim_{t \rightarrow \infty} \frac{e^{z[1-(\beta {\mathbb E}[t])^{-1}]}}{z^{\alpha/2 -1} e^{z/2}} = 0
\end{equation*}
assuming $g(t) = O( e^{-t/(\beta {\mathbb E}[T])}).$ 
Instead for $t \rightarrow 0$ the limit reduces to
\begin{equation*}
\lim_{t \rightarrow 0} \,\, \frac{h(t)}{\sqrt{w(t)}} = \lim_{t \rightarrow 0} 
e^{t/2} t^{\delta-\alpha/2-1} + \lim_{t \rightarrow 0} 
e^{t/2} t^{\delta-\alpha} = 0
\end{equation*}
if $\delta > 1 + \alpha/2.$
\end{proof}

The request of the existence of the second derivative in Theorem \ref{thm:a_k} is a reasonable assumption, since the property can be seen as a consequence of the following observation.
\begin{rem}
Following \cite{pauwels1987smooth},  we could have alternatively asked that there exists $\varepsilon>0$ such that $\sigma \sqrt{x} \geq \varepsilon$, for all  $x$ in the state space of the process. This condition implies, in this case, the existence and boundedness at least of the first two derivatives of the FPT pdf $g(t)$.
To investigate Pauwels' condition, one can follow Feller's classification of the boundaries \cite{feller1952parabolic}.  Using the transition densities of the Feller  process \cite{masoliver2012first}, one has to show that the flux through the value $\varepsilon$ is zero or that the capacity of the interval $[0,\varepsilon)$  vanishes \cite{bertini2008modelling}. For a fixed, small $\varepsilon>0$ we observe, at least numerically, that the capacity of the interval $[0,\varepsilon)$ (see formula 19 in \cite{bertini2008modelling})
goes to zero as $\mu$ increases. This means that for a \lq\lq large enough'' choice of $\mu$, the mentioned assumption in Theorem \ref{thm:a_k} is satisfied.
\end{rem}

By truncating the series
in \eqref{approximationgen3} up to the order $n$, the approximated $\hat{g}_n(t)$ in \eqref{approximationgen2bis} can be rewritten as 
\begin{equation}
\label{expansion3}
\hat{g}_n(t) = f_{\alpha,\beta}(t) p_n(t) \,\,\,\,\,\,\,\, {\rm where} \,\,\,\,\,\,  p_n(t)  = \sum_{k=0}^n h_{n,k} \frac{(-\beta t)^k}{k!} 
\end{equation}
with
\begin{equation*}
h_{n,k} = \sum_{j=k}^n {\mathcal B}_j^{(\alpha)}
\binom{\alpha+j}{j-k} \quad \hbox{\rm and} \quad \binom{\alpha+j}{j-k} =
\left\{ \begin{array}{ll}
1, & j=k,\\
\frac{(\alpha+j)(\alpha+j-1)\cdots (\alpha+k+1)}{(j-k)!},
& j > k.
\end{array}\right.
\end{equation*}
In particular $\{h_{n+1,i}\}_{i=0}^{n+1}$
can be recovered from 
$\{h_{n,i}\}_{i=0}^n$ 
using the following recursion formula \cite{di2023approximating}
\begin{equation}
\label{rech}
    h_{n+1,i}= \left\{ \begin{array}{ll}
{\mathcal B}_{n+1}^{(\alpha)}, & \text{for } i=n+1, \\
h_{n,i} + {\mathcal B}_{n+1}^{(\alpha)} \binom{\alpha+n+1}{n+1-i}, &  \text{for } i=0, \ldots, n.
\end{array} \right.
\end{equation}
Likewise, the coefficients $\{{\mathcal B}_{n+1}^{(\alpha)}\}$ can be recovered from $\{{{\mathcal B}_{j}}^{(\alpha)}\}_{j=0}^n$ as
\begin{equation}
{\mathcal B}_{n+1}^{(\alpha)} =
\sum_{j=1}^{n+1} \binom{n+1}{j} (-1)^{j+1} {\mathcal B}_{n+1-j}^{(\alpha)} + \frac{(-\beta)^{n+1} {\mathbb E}[T^{n+1}]}{(\alpha+n+1)_{n+1}}.
\label{recursionB}
\end{equation}

Due to the orthogonality property of generalized Laguerre polynomials, the approximation   $\hat{g}_n(t)$  has nice properties, that are:
\begin{equation}
\int_0^{\infty}  \hat{g}_n(t) {\rm d}t = 1
\label{(norm)}
\end{equation}
for all $n \geq 0,$  and the first $n$ moments of
$\hat{g}_n(t)$ are the same of $g(t).$ 
Unfortunately, $\hat{g}_n(t)$ is not guaranteed to be a pdf since negative values can occur. Indeed the values assumed by the polynomial $p_n(t)$ are not necessarily non-negative. However $p_n(t)$ may hold non-negative values on the tails and in a right-handed neighborhood of the origin, depending on the sign of some coefficients in \eqref{expansion3}. These conditions are established  in the following proposition. 
\begin{prop}\label{prop:hnpos} Suppose $p_n(t) > 0$ for all $t > 0$. Then $(-1)^nh_{n,n} \ge 0$ and $h_{n,0} \ge 0$. Conversely, if  $h_{n,0} > 0$ and $(-1)^n h_{n,n} > 0$, there exist $t_1 > 0$ and $t_2 > 0$ such that $p_n(t) > 0$ in $(0,t_1) \cup (t_2, +\infty)$, with $t_1$ and $t_2$ not necessarily distinct or finite.
\end{prop}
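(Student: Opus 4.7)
The proof is essentially a polynomial-limit argument: the whole point is that $p_n$ is a polynomial of degree $n$ in $t$, so its behavior on $(0,t_1)$ is governed by the value $p_n(0)$ and its behavior on $(t_2,+\infty)$ is governed by its leading coefficient. My plan is to read off both from the expression in \eqref{expansion3}.

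First I would identify the two relevant coefficients. Writing
\begin{equation*}
p_n(t) = \sum_{k=0}^n h_{n,k} \frac{(-\beta t)^k}{k!},
\end{equation*}
I see immediately that $p_n(0)=h_{n,0}$ and that the leading coefficient of $p_n$ as a polynomial in $t$ is $h_{n,n}(-\beta)^n/n!$, whose sign is the sign of $(-1)^n h_{n,n}$ since $\beta>0$. This turns the proposition into a statement about the value at $0$ and the leading coefficient of a real polynomial.

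For the direct implication, I would assume $p_n(t)>0$ for all $t>0$ and then take two limits. Continuity of $p_n$ at $0$ gives $h_{n,0}=p_n(0)=\lim_{t\to 0^+}p_n(t)\ge 0$. For the leading coefficient, divide by $t^n$: since $p_n(t)/t^n \to h_{n,n}(-\beta)^n/n!$ as $t\to\infty$ while $p_n(t)/t^n \ge 0$ on $(0,\infty)$ by hypothesis, the limit must be non-negative, yielding $(-1)^n h_{n,n}\ge 0$.

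For the converse, assume $h_{n,0}>0$ and $(-1)^n h_{n,n}>0$. Since $p_n(0)=h_{n,0}>0$ and $p_n$ is continuous at $0$, there exists $t_1>0$ with $p_n(t)>0$ on $[0,t_1)$. For the behavior at infinity, I would use the standard fact that $p_n(t)\sim h_{n,n}(-\beta)^n t^n/n!$ as $t\to+\infty$; since this leading term has strictly positive coefficient under our assumption, there exists $t_2>0$ beyond which $p_n(t)>0$. The statement that $t_1$ and $t_2$ need not be distinct nor finite is naturally covered by this argument: if $p_n$ happens to be positive everywhere then we may take $t_1=+\infty$ (or $t_2=0$), and if the two guaranteed positivity regions overlap then the $(0,t_1)\cup(t_2,+\infty)$ description still holds. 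There is no serious obstacle here; the only mild subtlety is the possibility of equality to zero in the direct implication (hence the weak inequalities $\ge$, versus the strict $>$ in the converse hypothesis), which is handled exactly by the limit/asymptotics dichotomy above.
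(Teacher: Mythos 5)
Your proposal is correct and follows essentially the same route as the paper: the paper's proof also reads off $p_n(0)=h_{n,0}$ and the asymptotic $p_n(t)\sim(-1)^n h_{n,n}\beta^n t^n/n!$, then invokes continuity (the sign permanence theorem) in both directions. You merely spell out the limit arguments the paper leaves implicit.
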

\begin{proof}
From \eqref{expansion3} we have
\begin{equation*}
p_n(0) = h_{n,0} \text{ and } p_n(t) \sim (-1)^n h_{n,n} t^n \frac{\beta^n}{n!},\text{ for }t \rightarrow \infty.
\end{equation*}
Since $\beta > 0$ and 
$p_n(t) \in C(0,\infty),$ the results follow from 
the sign permanence theorem.
\end{proof} 
Integrating $\hat{g}_n(t)$ in 
\eqref{expansion3} over $(0,t)$, an approximation of the FPT is
\begin{equation}
\hat{G}_n(t) = \frac{1}{\Gamma(\alpha+1)} \sum_{k=0}^n \frac{(-1)^{k}}{k!} h_{n,k} \left[ \Gamma(\alpha+k+1) - \Gamma (\alpha+k+1,\beta t) \right] \label{(cdf)}
\end{equation}
where $\Gamma(a,t) = \int_t^{\infty} \tau^{a-1} e^{-\tau} {\rm d} \tau$ is the incomplete Gamma function.  

\subsection{On the order $n$ of the approximation}
\label{napprox}
The normalization condition \eqref{(norm)} has been used to derive a first stopping criterion \cite{di2023approximating}. Indeed \eqref{(norm)} is equivalent to (Proposition 4.2 \cite{di2023approximating})  
\begin{equation}
h_{n,0}+\sum_{i=1}^n \frac{(-1)^i}{i!} h_{n,i} (\alpha +i)_i = 1.
\label{eq:cond1}
\end{equation}
As a result, in \eqref{expansion3} the order $n$  is increased as long as \eqref{eq:cond1} is satisfied with a fixed level of tolerance. Here, with the more conservative aim of obtaining reliable approximations, we propose to join the normalization condition \eqref{eq:cond1} with an additional condition following from Proposition \ref{prop:hnpos}. This condition guarantees an order $n$ of approximation such that $p_n(t)$ is positive close to the origin and as $t \rightarrow +\infty.$
Therefore, the recursive procedure runs as long as
\begin{equation}\label{eq:stop}
(|\hat{h}_{n+1}-1|>\varepsilon,\;\mbox{ for a fixed } \  \varepsilon>0) \mbox{ or }  (h_{n,0} > 0 \text{ and } (-1)^n h_{n,n} > 0)
\end{equation}
is fulfilled. 
\subsection{On the choice of $\alpha$ and $\beta$} \label{choiceAB}
Denote with $X_{\alpha,\beta}$ the rv having pdf $f_{\alpha,\beta}$ in \eqref{perprop31}. In \cite{di2021cumulant}  we have chosen
\begin{equation}
\label{eq:condsuaeb}
\alpha = \frac{c_1^2[T]}{c_2[T]}-1 \quad \text{and }\quad \beta = \frac{c_1[T]}{c_2[T]}
\end{equation}
since with these choices we have ${\mathcal B}_1^{(\alpha)}=
{\mathcal B}_2^{(\alpha)}=0$ in \eqref{(coeffak1)} and 
\begin{equation*}
 \mathbb E[X_{\alpha,\beta}] = \frac{\alpha+1}{\beta} = \mathbb E[T] 
 \quad \text{and }\quad 
 \mathbb E[X_{\alpha,\beta}^2] = \frac{(\alpha+1)(\alpha+2)}{\beta^2} = \mathbb E[T^2].
\end{equation*}  
Let us underline that a range of values was investigated for $\alpha$ and $\beta$. Actually the choices in \eqref{eq:condsuaeb} seem to be the most reliable with respect to the selection of parameters in \eqref{eqn: defn feller}. On the other hand, this choice falls  within the classical method of moments and is also suggested in \cite{provost2016distribution}, where a general procedure is developed for the approximation of a pdf based on its moments.  Different choices are suggested in \cite{belt1997optimal} where results concerning the determination of the two parameters $\alpha$ and $\beta$ are presented. Unfortunately, adopting these choices requires a knowledge of the FPT pdf not depending on $\alpha$ and $\beta$, which is not true in the case of CIR process. Moreover, the choices of $\alpha$ and $\beta$ in \eqref{eq:condsuaeb} return
\begin{equation*}
c_{\nu}[X_{\alpha,\beta}] = c_{\nu}[T] = \frac{\sqrt{c_2[T]}}{c_1[T]}
\end{equation*}
where $c_{\nu}$ denotes the coefficient of variation. In such a case the first equation in \eqref{eq:condsuaeb} reads $\alpha+1=(c_{\nu}[T])^{-2}.$ Thus an higher coefficient of variation reduces $\alpha + 1$ and increases the chance of a vertical asymptote of the gamma pdf in $0.$ As $g(0)=0,$ the occurrence of this vertical asymptote represents a numerical issue which is further worsened if the FPT pdf is flat with a large mean value and a heavy right tail with a large variance. To deal with this scenario, a successful strategy is the employment of a suitable standardization technique. The idea is to construct the  approximation $\tilde{g}_{n}(t)$ of the pdf $\tilde{g}(t)$ corresponding to
\begin{equation}
\tilde{T}=T/\sigma_T,
\label{stand}
\end{equation} 
where $\sigma_T$ is the standard deviation of $T.$ The approximated FPT pdf and cdf can be recovered as 
\begin{equation*}
\hat{g}_{n}(t) = \frac{1}{\sigma_T} \tilde{g}_{n}\left(\frac{t}{\sigma_T}\right) \quad {\rm and} \quad \hat{G}_{n}(t) = \tilde{G}_{n}\left(\frac{t}{\sigma_T}\right) 
\end{equation*} 
respectively. 
As $c_2[\tilde{T}]={\rm Var}[\tilde{T}] =1,$ from \eqref{eq:condsuaeb} the parameters of the gamma pdf are
\begin{equation*}
\tilde{\alpha} :=  ({\mathbb E}[\tilde{T}])^2-1 \,\, \text{and }\,\, \tilde{\beta}:= {\mathbb E}[\tilde{T}] 
\end{equation*}
so that $\tilde{\alpha} + 1 = \tilde{\beta}^2 = (c_{\nu}[X_{\tilde{\alpha},\tilde{\beta}}])^{-2} =  (c_{\nu}[\tilde{T}])^{-2}$
and ${\rm Var}[X_{\tilde{\alpha},\tilde{\beta}}]=1.$ 
The advantage of this strategy is to use an initial guess (the gamma pdf) with a shape resembling the picked shape of the FPT pdf and concentrating probability mass on small time values. Moreover, the moments ${\mathbb E}[{\tilde T}^n]$ grow slower than the moments ${\mathbb E}[T^n],$ leading to an observed improvement of numerical stability. 

Since the pdf $g(t)$ has support $(0, \infty),$ further information on the shape of the density can be recovered using some dispersion indices that work as the coefficient of variation but provide further global statistical information
\cite{kostal2011variability}.  In particular we consider the relative entropy based dispersion coefficient
\begin{equation}
c_h := \frac{\sigma_h}{\mathbb{E}(T)} = \frac{1}{\mathbb{E}(T)} \exp
\left\{\int_0^{\infty} g(t) \ln g(t) \, {\rm d}t -1\right\} .
\label{ch}
\end{equation}
The value of $\sigma_h$ quantifies how evenly is the pdf over $(0,\infty).$ Moreover, in logarithmic scale, $c_h$ is inversely proportional to the  Kullback-Leibler distance of the pdf $g(t)$ from the exponential density with mean ${\mathbb E}[T].$ For densities resembling the exponential distribution, the coefficients $c_{\nu}$ and $c_h$ are approximately equal to $1$.  
\section{Computational issues} \label{section4}
In \cite{di2023approximating} a fast recursive procedure for implementing \eqref{expansion3} is proposed, relying on nested products and taking advantage of \eqref{rech} and \eqref{recursionB}. 
Note that, differently from the geometric Brownian motion in \cite{di2023approximating}, in the case of the CIR process an additional issue arises in computing \eqref{recursionB}. The FPT moments are calculated from the FPT cumulants \eqref{cumulants}
using the recursion \eqref{eq:recursionmoments}. Because of the series involved in \eqref{cumulants},  an approximation order $m$ should be chosen before computing the moments through \eqref{eq:recursionmoments}.
Here, a standard approach has been used, which involves computing partial sums of the series as long as their difference exceeds an input tolerance.

\subsection{On the monotonicity of $\hat{G}_n$}\label{sec:pos1}
For a fixed $\Delta t >0,$ the computation of the FPT cdf can benefit from the iterative calculation of increments
\begin{equation}
\label{increment}
\Delta \hat{G}_n(t) = 
\frac{1}{\Gamma(\alpha+1)} \sum_{k=0}^n \frac{(-1)^{k}}{k!} h_{n,k} \left[ \Gamma(\alpha+k+1, \beta t) - \Gamma (\alpha+k+1,\beta (t+\Delta t)) \right]
\end{equation}
where $\Delta \hat{G}_n(t) = 
\hat{G}_n(t + \Delta t) - \hat{G}_n(t), t > 0.$
Note that the increments $\Delta \hat{G}_n(t)$ might be negative, in accordance with the values of $t$ where $\hat{g}_n(t) <0.$
As a by-product, the approximated cdf may turn out to be decreasing in these intervals. 
Moreover, if in a right-handed neighborhood of the origin the first increments are already negative, this circumstance might determine negative values of the approximated cdf itself in the same neighborhood.  
A possible correction to this last drawback is: set  $\tau_0 = 0$  or 
$\tau_0 =  \min\{ t > 0 | \Delta \hat{G}_n(t) <0 \},$ depending if $\Delta \hat{G}_n(\Delta t)$ is negative or not, and $\tau_1 = \min\{ t > \tau_0 | \hat{G}_n(t) > \hat{G}_n(\tau_0) \}.$   Then  iteratively find the intervals $[\tau_i, \tau_{i+1}]$ such that
$$\tau_i = \min \{ t > \tau_{i-1} | \Delta \hat{G}_n(t) < 0\} \quad 
{\rm and} \quad \tau_{i+1} = \min \{ t > \tau_{i} | \hat{G}_n(t) > \hat{G}_n(\tau_i) \}$$
in order to replace 
$\hat{G}_n(t)$ with a suitable line for $t \in [\tau_i, \tau_{i+1}].$ For the cases here considered  the intervals $t \in [\tau_i, \tau_{i+1}]$ have a small amplitude.  The advantage of this approach is twofold. The first one is getting an approximated cdf which is positive and increasing. The second advantage is the chance to use an ad hoc numerical procedure to  carry out the derivative of the resulting cdf and thereby automatically recovering an approximation of the pdf which is always positive. 
Since the corrected $\hat{G}_n(t)$ is linear for $t \in  [\tau_i, \tau_{i+1}],$ the corresponding pdf will be constant  in $[\tau_i, \tau_{i+1}].$
Thus, this approximation is computationally simple, but it might fail to fit some properties of the FPT pdf of a CIR process.
The following subsection suggests a different correction of the approximated pdf taking into account specifically these properties. 
 
\subsection{On the positivity of $\hat{g}_n$}\label{sec:pos2}
For a fixed order $n$ of approximation, 
it could be of interest constructing the pdf directly, overcoming the drawbacks occurred in the numerical derivation of the cdf. In that case, 
although the stopping criteria in \eqref{eq:stop} take into account Proposition \ref{prop:hnpos},
there is no guarantee that $\hat{g}_n(t)$ is non-negative on $(0, \infty)$  depending on the values assumed by $p_n(t)$ for $t \in (t_1, t_2).$ If this happens, an ad hoc strategy can be implemented to solve this issue. 

Suppose 
$(t_{1,\text{neg}}, t_{2,\text{neg}}) \subseteq (t_1, t_2)$
such that 
$\hat{g}_n(t) < 0, \, t \in (t_{1,\text{neg}}, t_{2,\text{neg}}).$
In what follows we develop a simple numerical procedure to replace $\hat{g}_n(t)$ with a suitable positive function $p(t)$, for $t$ in a generic interval $ (t'_1, t'_2)  \supseteq (t_{1,\text{neg}}, t_{2,\text{neg}}).$ 
It is reasonable to assume that $\hat{g}_n(t)$ can be negative in an interval located to the right or/and to the left of the approximated mode of the FPT rv, since the FPT pdf of a diffusion process is unimodal \cite{Uwe80}.
 In both cases, a classical technique would consist in selecting a fourth-degree polynomial $p(t)$   interpolating
 smoothly  $(t'_1,\; \hat{g}_n(t'_1))$ and $(t'_2,\; \hat{g}_n(t'_2)),$ fulfilling the additional constraints imposed by the conservation of probability mass
 \begin{equation}
\int_{t'_1}^{t'_2}p(t)\, {\rm d}t = \int_{t'_1}^{t'_2}\hat{g}_n(t)\, {\rm d}t,
\label{consvmass}
\end{equation}
as well as positivity and monotonicity. Since such a polynomial is unique, the last two remaining  conditions would possibly  be satisfied by a computationally cumbersome choice of $t'_1$ and $t'_2$. 
Therefore, in the following we propose a different approach
both to determine numerically $(t'_1,t'_2)$ and to correct $ \hat{g}_n(t),$ 
taking into account the conditions required on
the FPT pdf $g(t)$ in Theorem \ref{thm:a_k}.

Suppose $m^* = \max_{t \in (0,\infty)}{\hat{g}_n(t)}$ be the approximated mode of the FPT rv $T$. 
In agreement with the previous observations, the following two possible scenarios might occur:
\begin{enumerate}
\item[{\it a)}] $\hat{g}_n(t) < 0$ for $t \in (t_{1,\text{neg}}, t_{2,\text{neg}})$ with $t_{2,\text{neg}} < m^*$ 
\item[{\it b)}] $\hat{g}_n(t) < 0$ for $t \in (t_{1,\text{neg}}, t_{2,\text{neg}})$ with $t_{1,\text{neg}} > m^*.$
\end{enumerate}
Two procedures were therefore developed  taking into account the behavior of the FPT pdf either in a right-handed neighborhood of the origin - Case {\it a)} - and on the tail - Case {\it b)} -  with the aim of minimizing the number of parameters involved while streamlining the writing and the procedure. No conservation of the probability mass \eqref{consvmass} is required in this strategy. Empirically, this is justified by the circumstance that in all the observed cases the negative areas are so small that the mass involved gives almost no contribution.
\subsubsection*{Case a)}
Since $g(0)=0,$ we set $t'_1 \coloneqq 0.$  To reduce the number of parameters, we assume $p(t)=a t^{\delta}$ with $a>0$ 
and $\delta > \frac{\alpha}{2} + 1$ according to Theorem \ref{thm:a_k}. Therefore, the correction of $\hat{g}_n(t)$ is defined as
\begin{equation}\label{eq:corr1}
    \hat{g}^{corr}_{n}(t) = \begin{cases}
        a\,t^\delta & 0 \leq t \leq t'_2,\\
        \hat{g}_n(t) & t > t'_2.
    \end{cases}
\end{equation}
In order to  achieve a certain level of smoothness, $a$ and $\delta$ in \eqref{eq:corr1} are
chosen such that 
$$ p(t'_1) = \hat{g}_n(t'_1)\quad {\rm and} \quad 
\left. \frac{{\rm d}}{{\rm d} t}p(t)\right|_{t=t'_1} = \left. \frac{{\rm d}}{{\rm d} t}\hat{g}_n(t) \right|_{t=t'_1}.$$ 
Finally, we set
$$t'_2 \coloneqq \min \left\{ t \in (0,m^*)  \suchthat  \int_{0}^{t} \hat{g}_n(t) \, {\rm d}t > 0 \right\}$$
to avoid an excessive  increment of the probability mass 
when $\hat{g}_n(t)$ is replaced by $\hat{g}^{corr}_{n}(t).$ Fig.~\ref{fig:pos}-a) shows an example of 
negative $\hat{g}_n(t)$ in an area close to the origin together with its correction $\hat{g}^{corr}_{n}(t),$ as obtained by the procedure  described above.
\subsubsection*{Case b)}
We assume $p(t)=a e^{bt},$ with $a >0$ and $b<0,$ according 
to the well known exponential asymptotic behaviour of the FPT pdf of diffusion processes with steady-state distribution  \cite{nobile_ricciardi_sacerdote_1986}. Therefore, the correction of $\hat{g}_n(t)$ is defined as \begin{equation}
    \hat{g}^{corr}_{n}(t) = \begin{cases}\label{eq:corr2}
        \hat{g}_n(t) & t < t'_1\\
        a e^{bt} & t'_1 \leq t \leq t'_2 \\
        \hat{g}_n(t) & t > t'_2.
    \end{cases}
\end{equation}
In this case, $a$ and $b$ are chosen such that 
$$p(t'_1) = \hat{g}_n(t'_1) \quad {\rm and} \quad 
p(t'_2) = \hat{g}_n(t'_2).
$$
In order to fit a decreasing exponential function  in the interval $(t'_1, t'_2)$, the endpoints  $t'_1$ and $t'_2$ are chosen such that
$$ t'_2 = \min \left\{{t > t_2}  \suchthat \frac{{\rm d}}{{\rm d}t} \hat{g}_n(t) <0\right\} \quad {\rm and} \quad t'_1 = \min \left\{{t < t_1}  \suchthat \hat{g}_n(t) > \hat{g}_n(t'_2)\right\}.$$
Fig.~\ref{fig:pos}-b) shows an example of 
negative $\hat{g}_n(t)$ for values of $t$ larger than the mode together with its correction $\hat{g}^{corr}_{n}(t),$ as obtained by the procedure just described.
\begin{figure}[h]
\centering
\includegraphics[scale = 1]{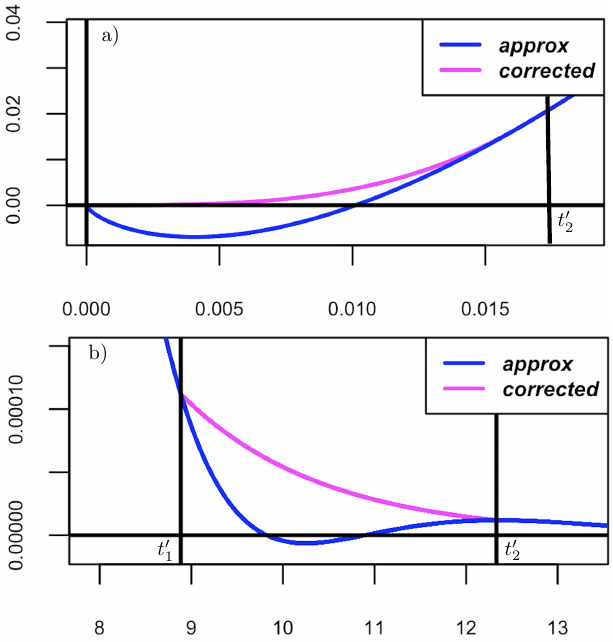}
\caption{In a) plots of the approximation $\hat{g}_n$ and of its correction $\hat{g}^{corr}_{n}$ \eqref{eq:corr1} 
over the interval $(0,t'_2)$ are given for $n = 10$ and parameters $y_0 = 0$, $\mu = 3$, $S=10$, $c = - 10$, $\sigma = 1.2, \tau = 0.2$ (see case C in Section \ref{numexamples}). In b), plots of the approximation $\hat{g}_n$ and of its correction $\hat{g}^{corr}_{n}$ \eqref{eq:corr2} over the interval $(t'_1,t'_2)$ are given for $n = 9$ and parameters $y_0 = 0.2$, $\mu = 0.9$, $S =1$, $c = 0$, $\sigma = 1.2, \tau = 2/3$ (see case A in Section \ref{numexamples}).
}\label{fig:pos}
\end{figure}
\section{\label{section5}
Numerical results } 
In this section, the functions $\hat{G}_n(t)$ in \eqref{(cdf)} and $\hat{g}_n(t)$ in \eqref{expansion3} 
are used to approximate respectively the FPT cdf $G(t)$ and the FPT pdf $g(t)$ of a CIR process \eqref{eqn: defn feller} with the corrections suggested in subsections \ref{sec:pos1} and \ref{sec:pos2}. 

Before examining the numerical results, in the following we provide some details on how the goodness of approximation was assessed. 
\subsection{Comparisons with alternative approximation methods  
} \label{comparison}

As the shape of the FPT pdf and cdf for a CIR process is unknown, the proposed approximations' validity needs to be evaluated by comparing it  with alternative estimates obtained using different techniques.  

For one-dimensional diffusion processes, the FPT pdf through a time-dependent boundary can be recovered as the solution of a Volterra integral equation of the second kind \cite{buo_integral}. With suitable numerical methods for approximating the integral, a discrete numerical evaluation of this solution can be efficiently computed.
Unfortunately, when implementing these tools, we came upon the issue that, when the coefficient of variation is large, 
this procedure 
is subject to overflow failures and could lead to completely misleading results. This circumstance is likely  amplified by an unavoidable  propagation of numerical errors  since the new approximated values
of the FPT pdf are computed using those recovered at the previous steps.
However, even for FPT pdfs with a small coefficient of variation, we encountered numerous issues in its implementation. These undesirable behaviors are essentially caused by the presence of the Bessel function
in the transition pdf of the process, whose derivatives cause numerical overflow issues. Similar difficulties have been encountered using the {\tt R} package {\tt fptdApprox} \cite{roman2fptdapprox}. Ultimately, beyond these pathological cases, the numerical results are 
comparable to those obtained by Monte Carlo methods, which we have therefore chosen as methods for assessing the goodness of approximation. 

A Monte Carlo method consists in generating
sample paths of the CIR process and look for their FPTs over the given threshold. When choosing how to sample the paths of the CIR process in the Monte Carlo method, we first implemented the Milstein algorithm
\cite{SDE} which generates a trajectory by a
suitable discretization of the stochastic differential equation \eqref{eqn: defn feller}. As it is well-known, this procedure is time-demanding:  to get a FPT sample of size $N,$ at least $N$ different trajectories of
the CIR process must be generated. Indeed not all the generated trajectories may reach the threshold in a reasonable time. This also implies that the FPT pdf can be underestimated if a
finite time interval has been set for the simulation, as usually happens. Moreover, the fixed time step  determines how accurately the dynamics can be described and the computational time increases as this time step gets smaller.
Therefore, to obtain significant results, it is necessary to choose a very small step size and simulate many trajectories of the CIR process.
This can be very time-consuming, especially if the coefficient of variation of $T$ is large, as a consequence of the likelihood of a large time span length over which the trajectories must be simulated.

Similar problems arise
when sampling positions of the CIR process using its transition pdf, a non-central chi-square distribution \cite{fel51}. In such a case, starting from $Y_0 = y_0,$ an instance of $Y_ {\Delta t}$ is  generated from the conditional distribution
of $Y_ {\Delta t}|Y_0 = y_0,$ an instance  of $Y_ {2 \Delta t}$ from the conditional distribution of $Y_ {2 \Delta t}|Y_ {\Delta t}$ and so on. The results obtained by the two Monte Carlo methods are comparable. 
However we have used the Milstein algorithm, since the computational time is lower in all the cases examined.  

Once a FPT random sample has been collected, 
a sufficiently reliable estimate of the cdf shape can be obtained using the empirical cdf. 
Nonparametric methods can also be used to obtain an estimate of the pdf.  The most widely used nonparametric  pdf estimator is the histogram. Despite its popularity, the drawbacks of this tool are well known in the literature, as for example, the strong dependence on bandwidth choice. In the literature, KDEs are typically mentioned as simple alternatives to histograms. If the unknown pdf has its support confined to the positive half line and is not smooth at the origin, then the kernel method can perform not efficiently \cite{hall1980estimating}. Indeed, the mode of the unknown pdf may be actually  hidden by the KDE assigning positive mass to negative values (see Fig.~\ref{fig:histdenB}).

To recover the smoothness characterising the KDEs and still obtain an adequate estimated density with support $(0, \infty)$, an estimator based on an orthogonal series can be very competitive \cite{efromovich2010orthogonal}. Indeed, 
an orthogonal series estimator is exactly what is obtained from the approximation $\hat{g}_n(t)$ in \eqref{approximationgen2bis} when the theoretical FPT moments are replaced by their corresponding sample moments. In fact, suppose a sample of iid FTPs $\mathcal{T} = \{T_1, \ldots, T_N\}$ is available, arising either from simulations or from experiments.  
A straightforward calculation shows that replacing FPT moments
(\ref{(momT)}) with sample moments  is equivalent to replacing ${\mathbb E}[L_k^{(\alpha)}(\beta T)]$ in (\ref{(coeffak1)}) with its sample mean estimator
$$\bar{l}_k = \frac{1}{N} \sum_{i=1}^N L_k^{(\alpha)}(\beta T_i).$$
Then, the FPT pdf $g(t)$ can be approximated by 
\begin{equation}
\tilde{g}_n(t) =f_{\alpha,\beta}(t) \bigg( 1 + \sum_{k = 1}^n   \bar{l}_k b_k^{(\alpha)}   L_k^{(\alpha)}(\beta t) \bigg) \quad {\rm with} \quad  b_k^{(\alpha)} = \frac{k! \, \Gamma(\alpha+1)}{ \Gamma(\alpha+1+k)} 
\label{seriesestimatornostro}
\end{equation}
which is the orthogonal series estimator of $g(t).$
This observation reveals an additional advantage of using the Laguerre-Gamma approximation. If the FPT moments/cumulants are not known but a random sample is available, the Laguerre-Gamma approach offers the opportunity to recover an approximation of the FPT pdf similarly to the orthogonal series estimators. In such a case, the estimates carried out
by sample moments or by $k$-statistics \cite{kstatistics} replace the occurrences of FPT moments or cumulants  respectively.
Under suitable hypotheses on the true pdf $g(t),$ estimations of  the convergence order of $\tilde{g}_n(t)$ to $g(t)$ are assessable through the mean integrated squared error \cite{hall1980estimating}.  Indeed, by recalling that the generalized Laguerre polynomial sequence is orthonormal with respect to the reference density  $f_{\alpha,\beta}(t),$ the mean integrated squared error is  \cite{diggle1986selection}
\begin{eqnarray}
J(n) & = & \mathbb E \int_0^{\infty} \left|\frac{\tilde{g}_n(t) - g(t)}{f_{\alpha,\beta}(t)}\right|^2 f_{\alpha,\beta}(t) \,\, {\rm d} \, t \label{MISEnostro} \\
& = &  \frac{1}{N} \sum_{k=1}^n b_k^{(\alpha)} {\rm Var}[L_k^{(\alpha)}(\beta T)] + \sum_{k > n} b_k^{(\alpha)} \mathbb E[L_k^{(\alpha)}(\beta T)]^2
\nonumber
\end{eqnarray}
with $b_k^{(\alpha)}$ as in \eqref{seriesestimatornostro}.  Thus increasing the sample size leads to a reduction of the error $J(n)$  as expected from the estimation of moments with sample moments. There are various strategies in the literature to choose the degree of the polynomial approximation $n$ in \eqref{MISEnostro}, see for example \cite{diggle1986selection}. A discussion on which strategy is the most effective for the CIR  process goes beyond the scope of this paper. 

In all the cases examined, the results estimated by the orthogonal series method on a collected FPT sample overlap with the Laguerre-Gamma approximations \eqref {expansion3}, when the stopping criteria addressed in Section \ref{napprox} are used. This explains why the subsequent section does not include these results. Instead, to assess the goodness of these approximations, 
the corresponding histograms have been used 
despite their well-known flaws.

\subsection{Numerical examples} \label{numexamples}
To analyse the efficiency and the usefulness of the proposed method, in the following  we consider three  scenarios:
\begin{enumerate}
\item[{\rm case} $A$:] $y_0 = 0.2$, $\mu = 0.9$, $S =1$, $c = 0$, $\sigma = 1.2$ and $\tau = 2/3$,
\vskip0.3cm
\item[{\rm case} $B$:] $y_0 = 0.01$, $\mu = 0.005$, $S = 0.02$, $c = 0$, $\sigma = 0.1$ and $\tau = 0.25$,
\vskip0.3cm
\item[{\rm case} $C$:] $y_0 = 0$, $\mu = 3$, $S=10$, $c = - 10$, $\sigma = 1.2$ and $\tau = 0.2.$
\end{enumerate}
Each case results in FPT pdfs and cdfs with different forms and statistical properties, as shown in Fig.~ \ref{fig:ABC_density}, where plots of empirical FPT cdfs are given.
\begin{figure}[h]
\centering
\includegraphics[scale = 0.42]{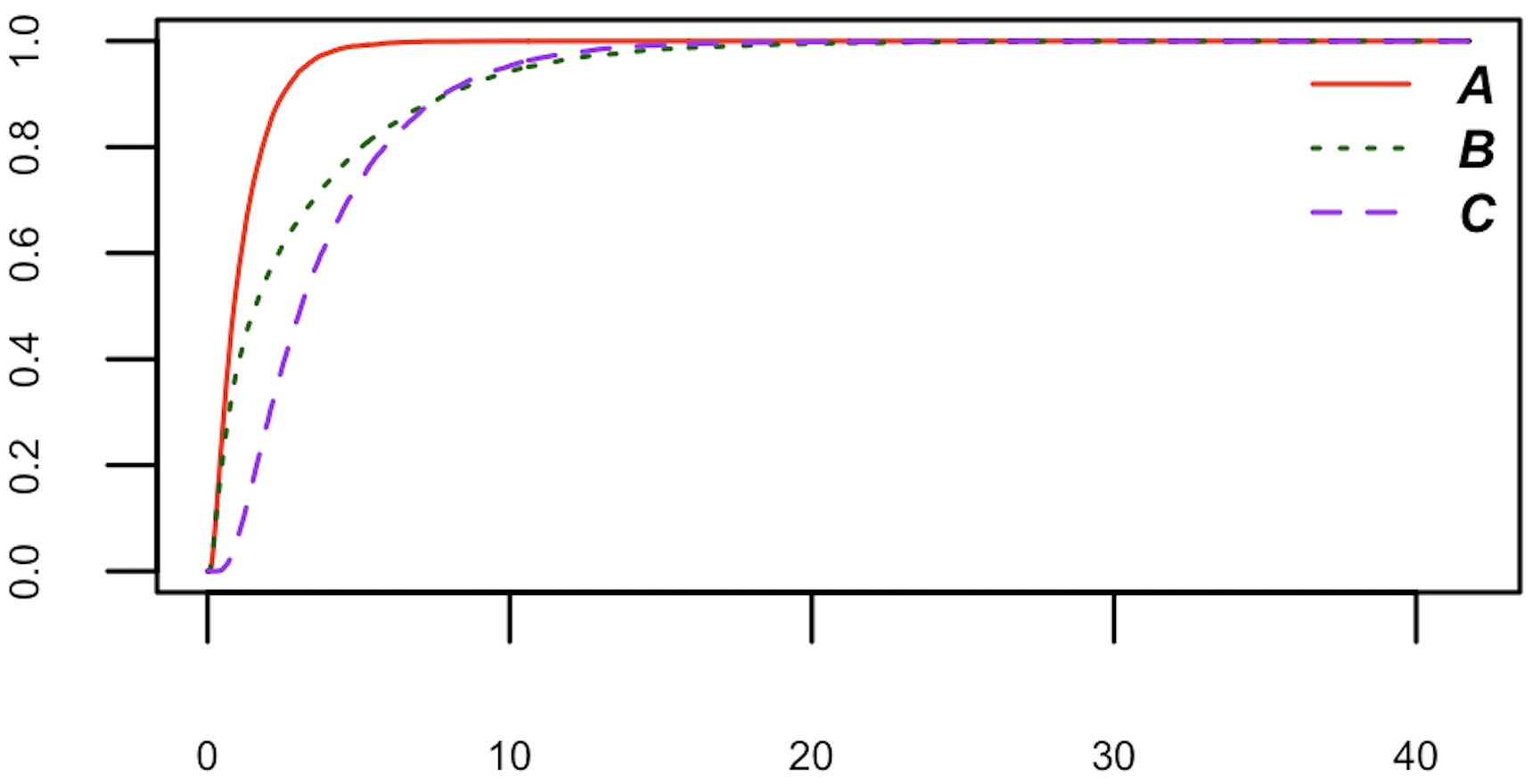}
\caption{Plots of the empirical (not standardized) FPT cdfs for cases A (in solid red), B (in dash green), and C (in dashed purple).}
\label{fig:ABC_density}
\end{figure}

According to Section \ref{comparison}, the empirical cdfs have been constructed after using
the Milstein method 
to simulate a sample of $10^4$ FTPs for each case. In the following, these three samples are denoted by $\mathcal{T}_A$, $\mathcal{T}_B$ and $\mathcal{T}_C$ respectively. 

For each case, we have computed the FPT dispersion coefficients as given in Section \ref{choiceAB}.  
The coefficient of variation is computed using the theoretical FPT mean and variance. The estimated relative entropy based dispersion coefficient $\hat{c}_h$ is computed with the Vasicek estimator \cite{kostal2012nonparametric} using the samples $\mathcal{T}_A$, $\mathcal{T}_B$ and $\mathcal{T}_C$ respectively. The results are given in Table \ref{Table1}.

\begin{table}[t!]
\centering
\caption{\footnotesize{FPT dispersion indexes $c_v$ and $\hat{c}_h$ togheter with mean, standard deviation, skewness and kurtosis for the cases A, B and C.} } 
\label{Table1}
\begingroup\small
\begin{tabular}{lrrrrrrr}
  \hline
  \hline
  & $c_v$ & $\hat{c}_h$  & ${\mathbb E}[T]$ & $\sqrt{{\rm Var}[T]}$ &  $\gamma_1$ & $\kappa_1$  \\ \hline 
 A & 0.855 & 0.909 & 1.16 & 0.984 & 1.968 & 5.9862\\ 
  B & 1.231 &0.916 & 2.991 & 13.56 &  2.39 & 8.118\\ 
  C & 0.765 & 0.855 & 3.937 & 9.084 & 1.905 & 5.572\\  
   \hline
\end{tabular}
\endgroup
\end{table}

\begin{figure}[h]
\centering
\includegraphics[scale = 0.7]{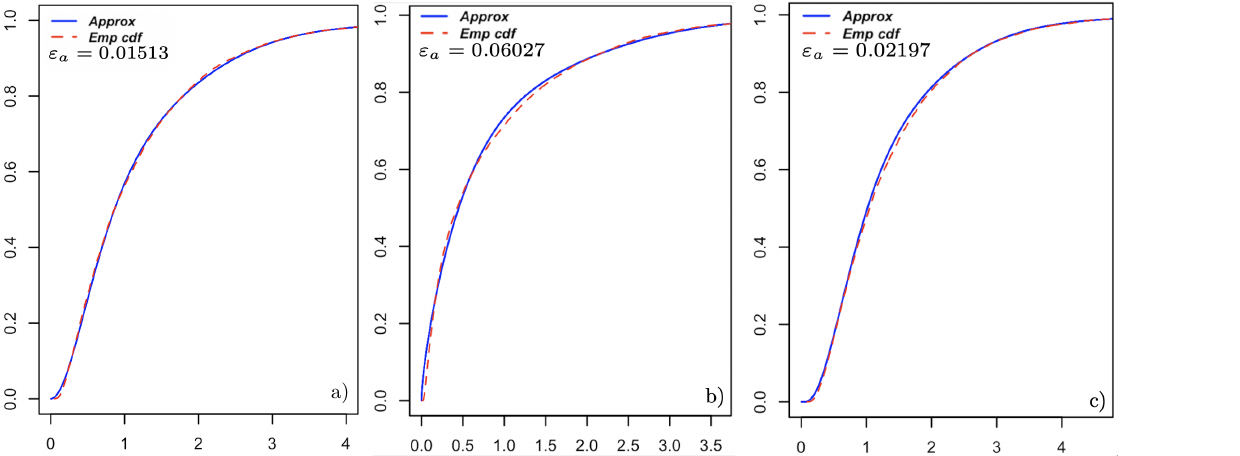}
\caption{Plots of the approximated $\tilde{G}_n(t)$ (in solid blue) and of the empirical cdf (in dashed red) together with the corresponding absolute error $\varepsilon_{a}.$ 
The plots refer: to case A in a) with $n = 10$, $\alpha = 0.367$ and $\beta = 1.17$; to case B in b) with $n = 10$, $\alpha = -0.34$ and $\beta = 0.812$;  to case C in c) with $n = 9$, $\alpha = 0.7$ and $\beta = 1.306$. Note that $\tilde{G}_n(t)$ is obtained using the stopping criteria \eqref{eq:stop} and corrected according to Section \ref{sec:pos1} while the empirical cdf is obtained from the  standardized samples ${\mathcal T}_A, {\mathcal T}_B$ and ${\mathcal T}_C$ respectively.}\label{fig:ABCcdf}
\end{figure}

Figs.~\ref{fig:ABCcdf},~\ref{fig:histdenA},~\ref{fig:histdenB} and \ref{fig:histdenC} refer to the standardized FPT rv $\tilde{T}$ in \eqref{stand}. 
In Fig.~\ref{fig:ABCcdf} we have plotted the empirical cdfs $G_e(t)$, corrisponding to the samples $\mathcal{T}_A$, $\mathcal{T}_B$ and $\mathcal{T}_C,$ together with the approximated cdfs $\hat{G}_n(t),$ as obtained using \eqref{increment} and the corrections described in Section \ref{sec:pos1}, for $\Delta t = 10^{-5},$
normalized by the corresponding standard deviations (see Table \ref{Table1}).  Moreover each figure displays the absolute error defined as $\varepsilon_{a} = \max_{t \geq 0}
|\hat{G}_n(t) - G_e(t)|.$
 Figs.~\ref{fig:histdenA}, \ref{fig:histdenB} and \ref{fig:histdenC}, correspond to cases A, B and C respectively. To emphasize the differences in density estimations, as discussed in Section \ref{comparison},  we have plotted in these figures a classical KDE\footnote{The  KDE has been generated by the {\tt R} function {\tt density()} \cite{R}.}, a histogram\footnote{The histogram has been generated by the {\tt R} function {\tt hist()} \cite{R}.} and the standardized approximated pdf $\tilde{g}_n(t),$ corrected according to  Section \ref{sec:pos2}.  Since the three considered instances  correspond to FPT pdfs with different shapes, these comparisons should provide a comprehensive picture of the strengths and weaknesses of the proposed method, which are discussed in the following.

\begin{figure}[h]
\centering
\includegraphics[scale = 0.35]{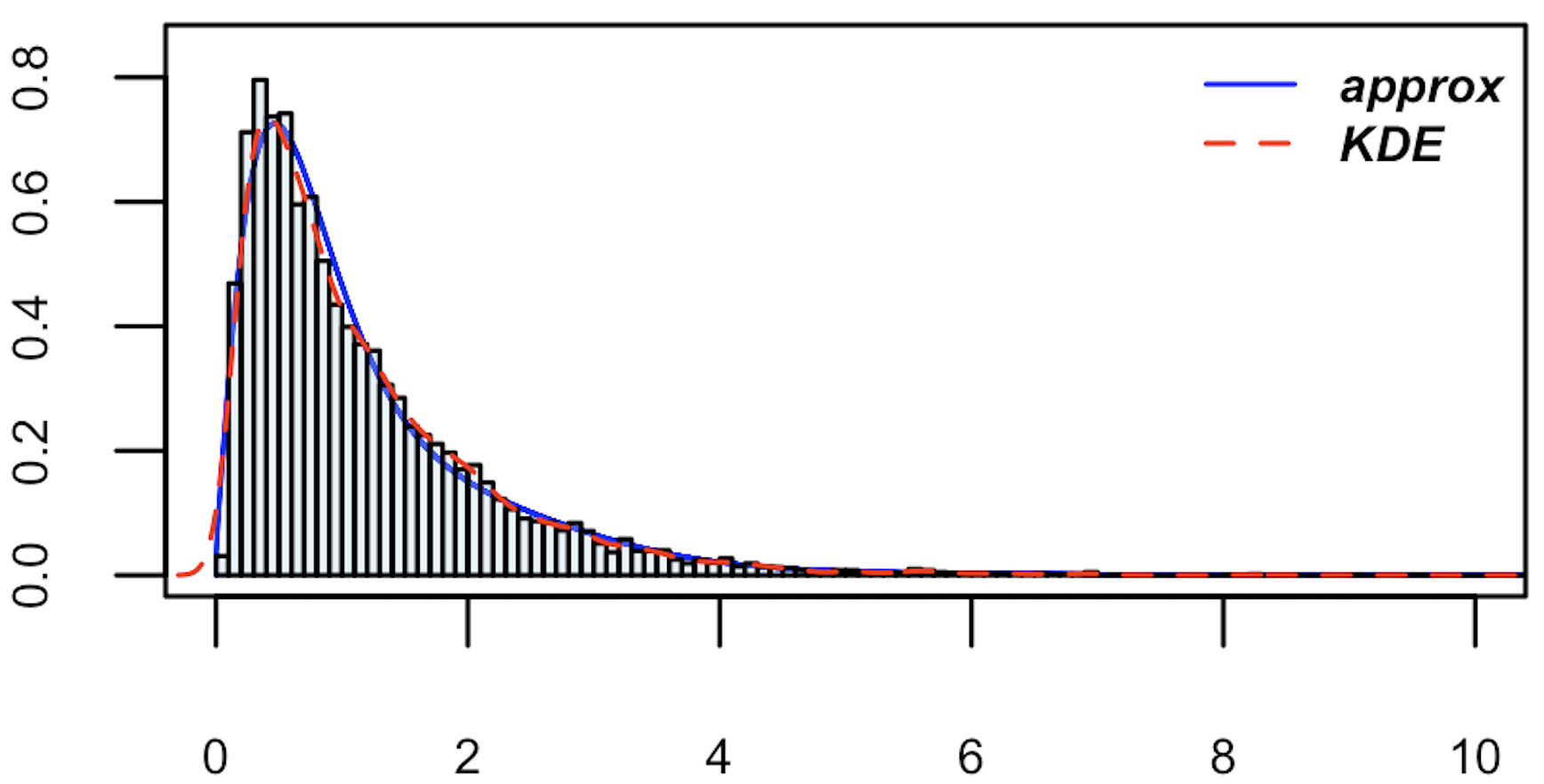}
\caption{Plot of $\tilde{g}_n(t)$ (in solid blue) in case A with $n = 10$, $\alpha = 0.367$ and $\beta = 1.17$, obtained with the stopping criteria \eqref{eq:stop} and corrected to ensure positivity as in \eqref{eq:corr2}, together with a KDE (in dashed red) and a histogram both computed with the standardized sample $\mathcal{T}_A$.}\label{fig:histdenA}
\end{figure}

\begin{figure}[h]
\centering
\includegraphics[scale = 0.5]{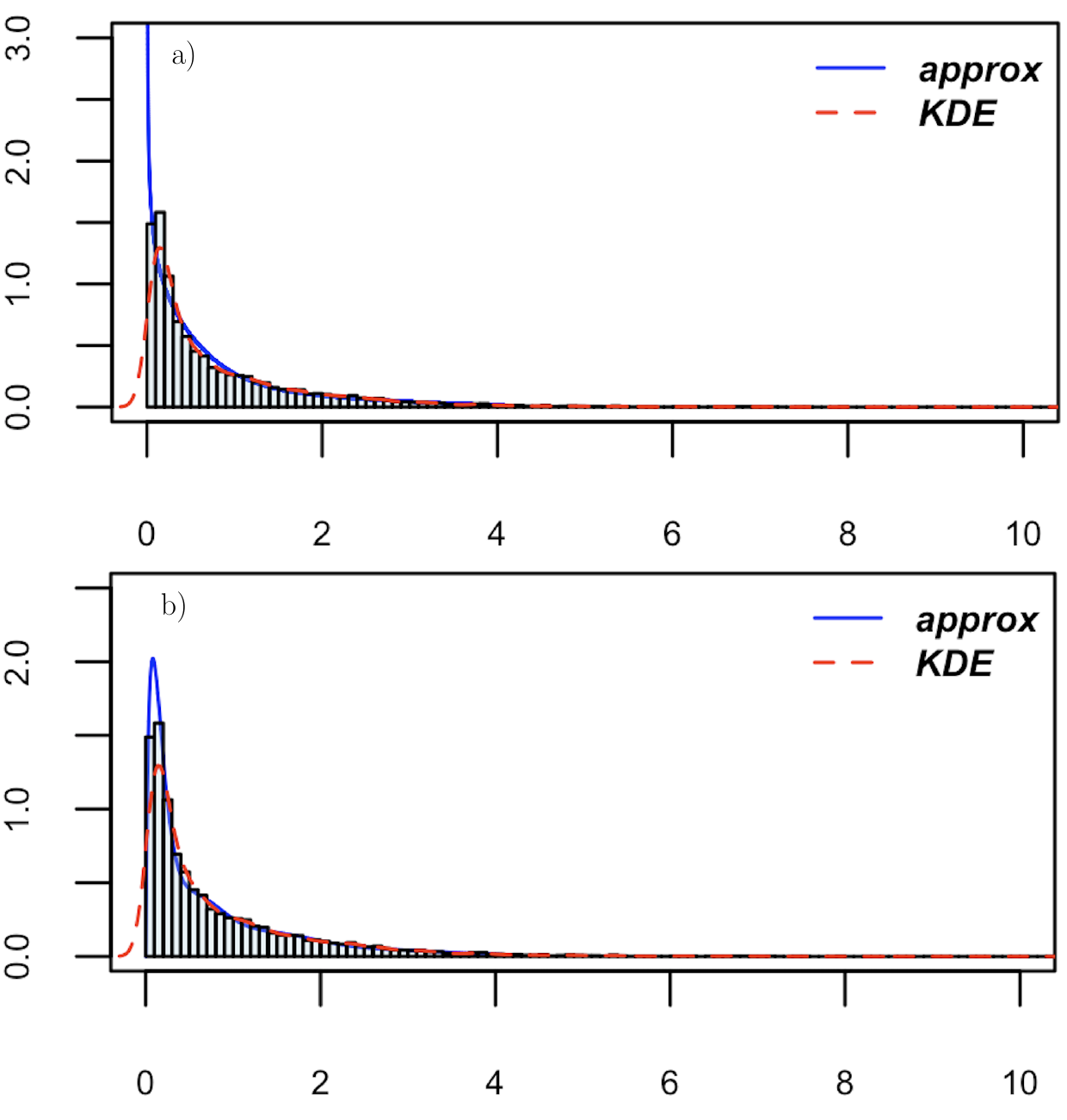}
\caption{In a), plot of $\tilde{g}_n(t)$ (in solid blue) in case B with $n = 10$, $\alpha = -0.34$ and $\beta = 0.812$, obtained with the stopping criteria \eqref{eq:stop} together with a KDE (in dashed red) and a histogram both computed with the standardized sample $\mathcal{T}_B$. In b), a plot of $\tilde{g}_n(t)$ (in solid blue) in case B with $n = 55$ and $\alpha = -0.34$, obtained without any stopping criterion, increasing the numerical precision. 
}\label{fig:histdenB}
\end{figure}

\begin{figure}[h]
\centering
\includegraphics[scale = 0.35]{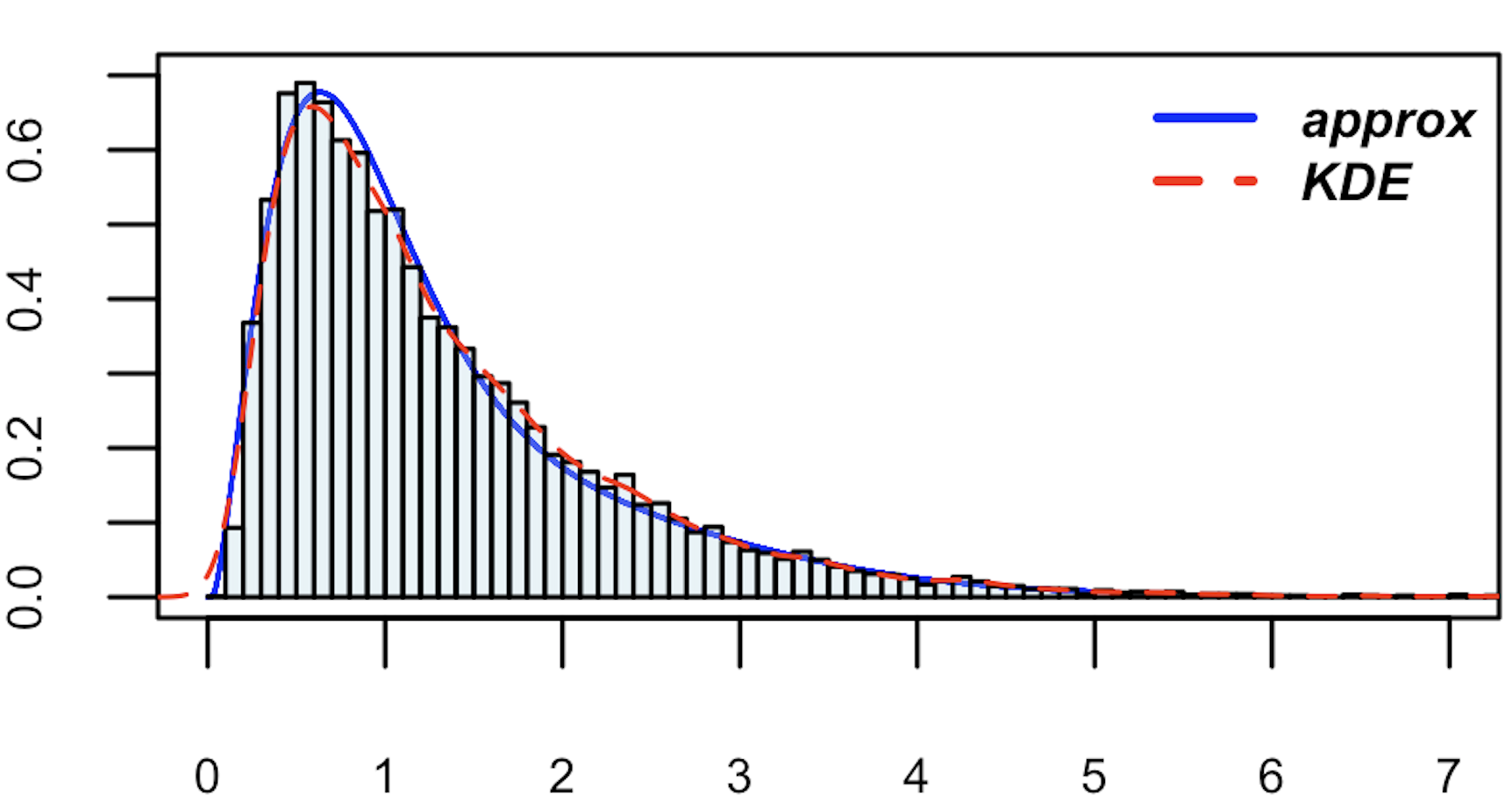}
\caption{Plot of $\tilde{g}_n(t)$ (in solid blue) for case C with $n = 9$, $\alpha = 0.7$ and $\beta = 1.306$, obtained with the stopping criteria \eqref{eq:stop} and corrected to ensure positivity as in \eqref{eq:corr1}, together with a KDE (in dashed red) and a histogram both computed with the sample $\mathcal{T}_C$, after the latter has been standardized.}\label{fig:histdenC}
\end{figure}

\subsubsection{Case A}  Among the three instances taken into consideration, case A has the lightest tails (see  Fig.~ \ref{fig:ABC_density}). This produces an accurate approximation, even with a small value of $n.$ Indeed there is a low absolute error
between the empirical and approximated cdf (see Fig.~ \ref{fig:ABCcdf}-a)), which is assumed at $t =  0.134.$ For the pdf, the suggested approach combined with the stopping criteria \eqref{eq:stop} yield an approximation $\tilde{g}_n(t)$ with $n=10$, $\alpha = 0.367$ and $\beta = 1.17$. In this case, $\tilde{g}_n(t)$ is negative on a small interval after the mode, as shown in Fig.~\ref{fig:pos}. Therefore, a suitable correction of $\tilde{g}_n(t)$ has been implemented, as outlined in Section \ref{sec:pos2}. This
corrected approximation is plotted in Fig.~\ref{fig:histdenA}. The same figure shows also the estimated pdf obtained with a classical KDE and with a histogram, both computed on the standardized sample $\mathcal{T}_A.$
\subsubsection{Case B} In this case, different considerations are required. From Table \ref{Table1}, the FPT rv has a coefficient of variation larger than 1. This makes the approximation more challenging because the distribution has a significantly heavy tail (see Fig.~ \ref{fig:ABC_density}). 
The stopping criteria \eqref{eq:stop}
yield an approximated FPT pdf $\tilde{g}_n(t)$ with $n = 10$, $\alpha = -0.34$ and $\beta = 0.812$, not adequately reproducing $\tilde{g}(t)$ when compared with a KDE and a histogram, as shown in Fig.~\ref{fig:histdenB} a).  This is a result of the first conservative stopping criterion in \eqref{eq:stop}, which was initially proposed to avoid numerical instability caused by an excessively high order of approximation.  To underline that the behavior of $\tilde{g}_n$ is significantly influenced by the numerical precision, Fig.~ \ref{fig:histdenB}-b) shows the remarkably good approximation obtained when computing $\tilde{g}_n(t)$ with a very high numerical precision, allowing to push the recursion up to $n=55$. The numerical precision has been raised using the {\tt R}-package \texttt{bignum} \cite{big23}. Still, it is worth to mention that the approximation $\tilde{g}_n(t)$ in Fig.~ \ref{fig:histdenB}-a) yields a satisfactory result for the tail of $\tilde{g}(t)$. Note that the absolute error at $t= 0.042$ between empirical and approximated cdf is higher compared to case A  (see Fig.~ \ref{fig:ABCcdf}-b)). This results from overestimating $\tilde{g}(t)$ in the  right-handed neighborhood of the origin, see Fig.~\ref{fig:histdenB}-b). 

\subsubsection{Case C} 
In this case, from Fig.~\ref{fig:ABC_density} and Table \ref{Table1},  the FPT pdf has a coefficient of variation less than 1 along with a tail whose heaviness lies between cases A and B.  As in case A, this value of the coefficient of variation should intuitively ensure a good approximation.  Indeed there is a low absolute error between the empirical and approximated cdfs (see Fig.~\ref{fig:ABCcdf}-c)), which is assumed at $t = 1.365.$ For the pdf, the suggested approach yields an approximated FPT pdf $\tilde{g}_n(t)$ with $n=9$, $\alpha = 0.7$ and $\beta = 1.306$. In this case, $\tilde{g}_n(t)$ is negative on a small interval before the mode and close to the origin (see Fig.~ \ref{fig:pos}). Therefore, also in this case, a suitable correction of $\tilde{g}_n(t)$ has been implemented, as outlined in Section \ref{sec:pos2}. The resulting approximation is shown in Fig.~ \ref{fig:histdenC}.

\section{Application: 
An acceptance-rejection type algorithm} \label{section 5}
In this section we propose a possible application of the polynomial FPT pdf approximation consisting in sampling FPTs using a method similar to the acceptance-rejection one. 

The acceptance-rejection method is a classical technique for sampling from a distribution that is unknown or difficult to simulate through an inverse transformation \cite{Robert1999monte}. Under such circumstances, samples are collected from an auxiliary density if a suitable probability of acceptance is known \cite{chib1995understanding, casella2004generalized}.

More in details, let  $Z$ be a rv  with pdf \(\pi\). Suppose there exists a constant \(M > 0\) and a pdf \(q(x)\) such that 
\begin{equation}
\label{ARcondition}
q(x) >0 \quad \text{whenever} \quad  \pi(x) > 0 \quad \text{and} \quad \frac{\pi(x)}{q(x)} \le M, \quad \forall x \in \operatorname{supp}(Z).
\end{equation}
The acceptance-rejection method exploits the  condition in \eqref{ARcondition} to sample from the support of $Z$.
Therefore, if the FPT pdf $g(t)$ were known and an upper bound for the right hand side of 
\begin{equation} \label{acceptance} 
\frac{g(t)}{f_{\alpha,\beta}(t)} = \sum_{k\ge 0} a_k^{\alpha}Q_k^{(\alpha)}(\beta t), \end{equation} 
were available, this method could have  been used to sample from the FPT rv. However, $g(t)$ is generally unknown. Therefore, 
\begin{equation} \label{eq:accrej1} 
\frac{\hat{g}_n(t)}{f_{\alpha,\beta}(t)} = p_n(t)
\end{equation}
can be used in place of \eqref{acceptance},
assuming $\hat{g}_n(t)$ as given in \eqref{expansion3} and non-negative for all $t >0.$ 
Unfortunately, since \(p_n(t)\) is a polynomial for any $n>0$, the right hand side of \eqref{eq:accrej1} is clearly unbounded on $(0, \infty)$.

We  provide a suitable modification of the standard acceptance-rejection method with the aim of sampling from the FPT rv using  \eqref{eq:accrej1}.
In the following,  suppose $T_n$ be the rv with non-negative pdf $\hat{g}_n(t)$ over $(0,\infty).$ 
The main steps of the method here proposed  can be summarized as follows:
\begin{enumerate}
\item[{\it i)}] find a constant $C$ such that $\mathbb{P}(T > C) \le \varepsilon$, for a fixed, small $\varepsilon > 0$; 
\item[{\it ii)}] for $t \leq C$ 
apply the classical acceptance-rejection method using the ratio \begin{equation}
\label{eq:accrej2}
\frac{\hat{g}_{\scriptscriptstyle{T_n|C}}(t)}{\tilde{f}_{\scriptscriptstyle{\alpha, \beta|C}}(t)} \leq M \quad \text{where} \quad 
M = \frac{{\mathbb P}(T_n \leq C)}{{\mathbb P}(X_{\alpha, \beta} \leq C)} \max_{t \in [0,C]}{p_n(t)}
\end{equation}
and  
\begin{equation} \tilde{f}_{\scriptscriptstyle{\alpha, \beta|C}}(t)= \frac{f_{\alpha,\beta}(t)}{{\mathbb P}(X_{\alpha, \beta} \leq C)} \mathbbm{1}_{(0,C]}(t),
\quad \hat{g}_{\scriptscriptstyle{T_n|C}}(t) = \frac{\hat{g}_n (t)}{{\mathbb P}(T_n \leq C)}\mathbbm{1}_{(0,C]}(t);
\label{leggeG}
\end{equation}
 \item[{\it iii)}] for $t> C$ sample from a truncated exponential rv $\bar{T}$ 
 with pdf 
\begin{equation}
g_{\scriptscriptstyle{\bar{T}}}(t)= \frac{1}{{\mathbb E}[T]}
\exp \left(-\frac{t-C}{{\mathbb E}[T]} \right) \mathbbm{1}_{(C,+\infty)}(t).
\label{leggeTn}
\end{equation}  
\end{enumerate}
The last step takes into account the FPT pdf's exponential asymptotic behavior for one-dimensional diffusion processes with steady-state distribution \cite{nobile_ricciardi_sacerdote_1986}. 
Algorithm \ref{algorithm1} outlines the proposed method for constructing an \lq\lq approximated'' sample $S$ of size $N$ from $T$.

\IncMargin{5mm}
\begin{algorithm}[]
\vspace{2mm}
\hspace{-5mm} Set the parameters $\alpha$, $\beta>0$, $n > 0$ and $\varepsilon>0$.\\[0mm]
 \SetKwBlock{Begin}{Initialise}{end}
 \Begin{
Find a constant $C$ such that $\mathbb{P}(T > C) \le \varepsilon$.\\
Set $M$ as in \eqref{eq:accrej2}.\\
$j \leftarrow 1$.\\
$S \leftarrow \{\,\}$.
}
 \SetKwBlock{Begin}{While $j < N$}{end}
 \Begin{
 \SetKwBlock{Begin}{With probability  $\varepsilon$}{end}
 \vskip 0.1cm
 \Begin{
Generate $\bar{T}$ from the truncated exponential pdf in \eqref{leggeTn}.\\
\vskip 0.1cm
$j \leftarrow j + 1$.\\
\vskip 0.1cm
$S\leftarrow S\cup T_j$.
}
 \SetKwBlock{Begin}{With probability  $1 - \varepsilon$}{end}
 \vskip 0.1cm
 \Begin{ 
 Generate $G$ from the truncated gamma pdf $\tilde{f}_{\scriptscriptstyle{\alpha, \beta|C}}$ in \eqref{leggeG}. \\ 
 \vskip 0.1cm
 Generate $U \sim U(0,1)$.\\
 \vskip 0.1cm
 \SetKwBlock{Begin}{While $U > \frac{\hat{g}_{\scriptscriptstyle{T_n|C}}(G)}{M \, \tilde{f}_{\scriptscriptstyle{\alpha, \beta|C}}(G)}$}{end} \Begin{
 \vskip 0.1cm
  Generate $G$ from the truncated gamma pdf $\tilde{f}_{\scriptscriptstyle{\alpha, \beta|C}}$ in \eqref{leggeG}. \\ 
 \vskip 0.1cm
 Generate $U \sim U(0,1)$.\\
 \vskip 0.1cm
    }
    $j \leftarrow j + 1$.\\
    \vskip 0.1cm
    $T_j \leftarrow G$.\\
    \vskip 0.1cm
    $S\leftarrow S\cup T_j$.
 }
 }
\textbf{Return} $S$.\\
 \vskip 0.1cm
 \caption{Modified acceptance-rejection method}
 \label{algorithm1}
\end{algorithm}
In addition to the user-specified input parameters, the constant $C$ in {\it i)} must be chosen for the algorithm initialization. The so-called {\it Vysochanskij-Petunin} inequality for one-sided tail bounds \cite{pukelsheim1994three} is used to achieve this goal.
\begin{thm}[Vysochanskij-Petunin inequality] If $r \ge 0$ and $X$ is a rv with unimodal density,  finite mean $\mu$ and finite variance $\sigma^2$, then
\begin{equation}\label{eq:accrecineq}
\mathbb{P}(X - \mu \ge r) \leq \begin{cases}
\frac{4}{9}\frac{\sigma^2}{\sigma^2 + r^2} &\text{if $3r^2 \ge 5\sigma^2$}, \\
\frac{4}{3}\frac{\sigma^2}{\sigma^2 + r^2} - \frac{1}{3} &\text{otherwise.}
\end{cases}
\end{equation}
\end{thm}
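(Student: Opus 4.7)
The plan is to combine Khintchine's representation theorem for unimodal distributions with a Cantelli-type optimization. First, writing $M$ for the mode of $X$, I would set $Z = X - M$, so that $Z$ is unimodal with mode at $0$. Khintchine's theorem then yields the distributional identity $Z \stackrel{d}{=} UY$, where $U \sim \mathrm{U}(0,1)$ is independent of a random variable $Y$. From $\mathbb{E}[U] = 1/2$ and $\mathbb{E}[U^2] = 1/3$, together with the hypotheses $\mathbb{E}[X] = \mu$ and $\operatorname{Var}(X) = \sigma^2$, one recovers $\mathbb{E}[Y] = 2(\mu - M)$ and $\mathbb{E}[Y^2] = 3\sigma^2 + 3(\mu - M)^2$. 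The event $\{X - \mu \ge r\}$ translates into $\{UY \ge (\mu - M) + r\}$.

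Setting $s = (\mu - M) + r$, one computes, by conditioning on $Y$ and a change of variables,
\[
\mathbb{P}(UY \ge s) \;=\; s\int_s^{\infty} \frac{\mathbb{P}(Y \ge u)}{u^2}\,{\rm d}u.
\]
Applying Cantelli's inequality $\mathbb{P}(Y \ge u) \le \operatorname{Var}(Y)/(\operatorname{Var}(Y) + (u - \mathbb{E}[Y])^2)$ produces an explicit integral majorant, which can be evaluated in closed form. Rearranging and optimizing over the implicit shift parameter (equivalently, over the admissible position of the mode) then yields the two cases of \eqref{eq:accrecineq}.

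The case split $3r^2 \ge 5\sigma^2$ versus $3r^2 < 5\sigma^2$ emerges from the first-order condition of this optimization. In the large-$r$ regime the unconstrained minimizer is interior and produces the Gauss-type factor $\frac{4}{9}\frac{\sigma^2}{\sigma^2 + r^2}$; in the small-$r$ regime the minimizer is driven to a boundary and gives the affine expression $\frac{4}{3}\frac{\sigma^2}{\sigma^2 + r^2} - \frac{1}{3}$. The threshold $3r^2 = 5\sigma^2$ is exactly the point at which the two branches glue together continuously, so verifying the equality at this junction can serve as a consistency check.

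The main obstacle will be the bookkeeping of the one-sided optimization, and in particular identifying the correct boundary when the Lagrangian critical point falls outside the admissible range for the shift $\mu - M$. A secondary subtlety is that the Khintchine reduction is centered at the mode $M$ rather than at the mean $\mu$, but the classical inequality $|M - \mu| \le \sigma\sqrt{3}$ for unimodal laws keeps the shift controlled by $\sigma$ and lets the optimization go through uniformly in $r$.
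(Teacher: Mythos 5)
The paper does not prove this statement; it is quoted verbatim from the cited reference \cite{pukelsheim1994three}, so there is no in-paper proof to compare against and your proposal must stand on its own. Your setup is fine as far as it goes: the Khintchine representation $X-M\stackrel{d}{=}UY$, the moment identities $\mathbb{E}[Y]=2(\mu-M)$, $\mathbb{E}[Y^2]=3\sigma^2+3(\mu-M)^2$, the layer-cake identity $\mathbb{P}(UY\ge s)=s\int_s^\infty u^{-2}\,\mathbb{P}(Y\ge u)\,{\rm d}u$, and the bound $|M-\mu|\le\sigma\sqrt3$ are all correct. The gap is the step where you insert Cantelli's inequality for $\mathbb{P}(Y\ge u)$ under the integral: that substitution is too lossy to recover the constants $4/9$ and $4/3$. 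Concretely, take $\sigma=1$, $M=\mu$ (so $m=\mathbb{E}[Y]=0$, $\operatorname{Var}(Y)=3$) and $r=s=2$. Your majorant evaluates to
\begin{equation*}
2\int_2^\infty \frac{1}{u^2}\,\frac{3}{3+u^2}\,{\rm d}u \;=\; 1-\frac{2}{\sqrt3}\arctan\frac{\sqrt3}{2}\;\approx\;0.176,
\end{equation*}
whereas the claimed bound is $\tfrac{4}{9}\cdot\tfrac{1}{5}\approx0.089$. Since no single $Y$ saturates Cantelli at every threshold $u$ simultaneously, the integrated Cantelli bound sits strictly above the true supremum, and this already happens in the most favourable position of the mode; the maximization over $\mu-M$ that validity actually requires only makes it worse. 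Relatedly, your language about an ``unconstrained minimizer'' of the shift is backwards: $\mu-M$ is determined by the (adversarial) distribution, so to obtain a bound valid for all unimodal laws you must take the supremum over it, not a minimum — minimizing would produce an invalid inequality.

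Two repairs are available. Staying with Khintchine, replace Cantelli by a pointwise bound on the kernel: $(1-s/y)^+\le \frac{4}{27}\,y^2/s^2$ for all $y$ (the maximum of $(1-s/y)s^2/y^2$ over $y\ge s$ is $4/27$ at $y=3s/2$), so $\mathbb{P}(UY\ge s)\le\frac{4}{27}\,\mathbb{E}[Y^2]/s^2=\frac{4}{9}\,\mathbb{E}[(X-M)^2]/s^2$; a companion affine-in-$y^2$ kernel bound gives the second branch, and one then optimizes the centering. Alternatively — and this is the shortest route, essentially Pukelsheim's — derive the one-sided inequality from the two-sided Vysochanskij--Petunin inequality about an arbitrary centre $\alpha$ (with $\tau^2=\mathbb{E}[(X-\alpha)^2]$) via the event inclusion $\{X-\mu\ge r\}\subseteq\{|X-(\mu-c)|\ge r+c\}$ and the Cantelli-optimal shift $c=\sigma^2/r$, which turns $\tau^2/(r+c)^2$ into $\sigma^2/(\sigma^2+r^2)$ and turns the two-sided validity threshold $(r+c)^2\ge\frac{8}{3}\tau^2$ into exactly $3r^2\ge5\sigma^2$.
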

Since the FPT rv of diffusion processes has a unimodal pdf \cite{Uwe80}, the Vysochanskij-Petunin inequality can be applied. Setting $4 \sigma^2/[9(\sigma^2 + r^2)] = \varepsilon$ in the first inequality  \eqref{eq:accrecineq}, we recover $r=r(\varepsilon)$ as a function of $\varepsilon$ and get the condition $\varepsilon \leq 1/6$ from $3 r(\varepsilon)^2 \geq 5 \sigma^2.$ Then set $C =\mu + r(\varepsilon)$ where
\begin{equation}\label{eq:C}
r(\varepsilon) = \begin{cases}
\sqrt{\frac{4 \sigma^2}{9 \varepsilon} - \sigma^2 } &\text{if $\varepsilon \leq 1/6$}, \\
\sqrt{\frac{4 \sigma^2}{1+ 3 \varepsilon} - \sigma^2 } &\text{if $ 1/6 < \varepsilon \leq 1$}.
\end{cases}
\end{equation}
\begin{rem}\label{rem:accrej1}
From \eqref{eq:C},
$C$ may become arbitrarily large by decreasing $\varepsilon$. However, as $C$ increases, so does $M$, increasing the chance of rejection and subsequently the required number of iterations.
\end{rem}
The quality of the outcome of Algorithm \ref{algorithm1} relies on the approximation $\hat{g}_n(t)$ and the selection of an exponential distribution for $t > C$. 

A theoretical justification for Algorithm \ref{algorithm1} is provided in the following. We first calculate the cdf of the rv $Y,$ whose observations are generated by Algorithm \ref{algorithm1}, and then  prove that such a distribution turns out to be a good approximation of the FPT cdf.
\begin{lem}\label{lem:accrej1}
If $Y$ denotes the rv sampled at the end of each cycle of Algorithm \ref{algorithm1}, then 
\begin{equation*} 
        \mathbb{P}(Y \leq t ) = \varepsilon 
        \left[1-\exp\left(-\frac{t-C}{\mathbb E(T)} \right) \right] 
        + (1-\varepsilon)\mathbb{P}(T_n \leq t \;|\; T_n \leq C),\qquad t > 0
    \end{equation*}
where $C, n$ and $\varepsilon$ are given in Algorithm \ref{algorithm1}, and $T_n$ is the rv with pdf $\hat{g}_n(t).$ 
\end{lem}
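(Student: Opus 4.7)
The plan is to decompose the law of $Y$ by conditioning on the outcome of the Bernoulli choice at the top of the main \textbf{While} loop in Algorithm \ref{algorithm1}. Let $B$ be the event that at a given cycle the algorithm enters the \emph{With probability $\varepsilon$} branch, so $\mathbb P(B)=\varepsilon$ and $\mathbb P(B^c)=1-\varepsilon$. By the law of total probability,
\begin{equation*}
\mathbb P(Y\le t)=\varepsilon\,\mathbb P(Y\le t\mid B)+(1-\varepsilon)\,\mathbb P(Y\le t\mid B^c),
\end{equation*}
so it suffices to identify the two conditional distributions with the two summands in the statement.

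First I would handle the branch $B$. Conditionally on $B$, the algorithm draws $Y=\bar T$ directly from the truncated exponential density $g_{\bar T}$ given in \eqref{leggeTn}. A one-line integration of \eqref{leggeTn} over $(C,t]$ yields $\mathbb P(Y\le t\mid B)=1-\exp(-(t-C)/\mathbb E[T])$ for $t>C$ (and $0$ for $t\le C$), which matches the first term of the claim once multiplied by $\varepsilon$.

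Next I would handle the branch $B^c$, which executes a classical acceptance–rejection loop with proposal density $\tilde f_{\alpha,\beta|C}$ and target density $\hat g_{T_n|C}$. To invoke the standard acceptance–rejection lemma I need to verify the envelope condition $\hat g_{T_n|C}(t)\le M\,\tilde f_{\alpha,\beta|C}(t)$ for every $t\in(0,C]$; this is immediate from the definition of $M$ in \eqref{eq:accrej2}, since on $(0,C]$
\begin{equation*}
\frac{\hat g_{T_n|C}(t)}{\tilde f_{\alpha,\beta|C}(t)}=\frac{\mathbb P(X_{\alpha,\beta}\le C)}{\mathbb P(T_n\le C)}\,\frac{\hat g_n(t)}{f_{\alpha,\beta}(t)}=\frac{\mathbb P(X_{\alpha,\beta}\le C)}{\mathbb P(T_n\le C)}\,p_n(t),
\end{equation*}
whose maximum over $[0,C]$ is exactly $M$. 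Applying the standard acceptance–rejection lemma, the accepted draw has law $\hat g_{T_n|C}$, i.e.\ the conditional law of $T_n$ given $T_n\le C$, giving $\mathbb P(Y\le t\mid B^c)=\mathbb P(T_n\le t\mid T_n\le C)$.

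The only real subtlety — and therefore the step I would be most careful about — is the acceptance–rejection block: one must check that the inner \textbf{While} loop terminates almost surely (which follows from $M<\infty$, itself a consequence of $p_n$ being a polynomial and $[0,C]$ being compact, together with $\hat g_n\ge 0$ needed so that $\hat g_{T_n|C}$ is indeed a density) and that the output distribution is genuinely $\hat g_{T_n|C}$. Once these are in place, substituting the two conditional cdfs into the total-probability decomposition gives the displayed expression for $\mathbb P(Y\le t)$ and completes the proof.
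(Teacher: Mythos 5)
Your decomposition by the Bernoulli branch, the direct integration of the truncated exponential, and the acceptance--rejection argument for the other branch are exactly the structure of the paper's proof; the only difference is that you invoke the standard acceptance--rejection lemma (after checking the envelope condition) where the paper re-derives it explicitly via $\mathbb{P}(G\le t\mid G\text{ accepted})=M\,\mathbb{P}(G\le t,\,G\text{ accepted})$ and conditioning on $G=x$. The proposal is correct and essentially identical in approach.
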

\begin{proof}
According to Algorithm \ref{algorithm1}, we have
\begin{equation}
{\mathbb P}(Y \leq t) = {\mathbb P}(X = 1) 
{\mathbb P} (\tilde{T} \leq t) + {\mathbb P}(X = 0)  {\mathbb P}(G \leq t \; |\; G \, {\rm accepted})
\label{lawalternative}
\end{equation}
where $X$ is a Bernoulli rv of parameter $\varepsilon \in (0,1)$ independent from the rv
$\tilde{T},$ with truncated exponential pdf $g{\scriptscriptstyle{\tilde{T}}}(t)$ in \eqref{leggeTn}, and the rv $G$ with truncated gamma pdf $\tilde{f}_{\scriptscriptstyle{\alpha, \beta|C}}(t)$ in \eqref{leggeG}. Thus from \eqref{leggeTn} and \eqref{lawalternative}, we have 
\begin{equation}
{\mathbb P}(Y \leq t) = \varepsilon  \left[1-\exp\left(-\frac{t-C}{\mathbb E(T)} \right) \right] + (1 - \varepsilon) {\mathbb P}(G \leq t \; |\; G \, {\rm accepted}).
\label{lawalternative1}
\end{equation}
For the latter term in \eqref{lawalternative1} observe that 
\begin{equation}
{\mathbb P}(G \leq t \;| \; G\text{ accepted})  = M \, \mathbb{P} \textbf{}(G \leq t, G\text{ accepted})
\label{cond0}
\end{equation}
since ${\mathbb P} (G\text{ accepted}) = 1/M$ with $M$ given in \eqref{eq:accrej2}. Moreover
\begin{align}
\mathbb{P} \textbf{}(G \leq t, \, G\text{ accepted}) & = \int_0^{\infty} 
\mathbb{P} \textbf{}(G \leq t, \, G\text{ accepted} \; | \; G=x) \, \tilde{f}_{\scriptscriptstyle{\alpha, \beta|C}}(x) \,\, {\rm d}{x} \nonumber \\
& = \int_0^{\infty} 
\mathbb{P}(G \leq t \; | \; G=x) \, 
\mathbb{P} (G\text{ accepted} \; | \; G=x) \, \tilde{f}_{\scriptscriptstyle{\alpha, \beta|C}}(x) \,\, {\rm d}{x}  
\label{cond1}
\end{align}
since $(G\leq t)$ and $(G \,\, {\rm accepted})$ are conditionally independent events.
Observe that
$\mathbb{P}(G \leq t \; | \; G=x) = \mathds{1}_{x \leq t}$ and
\begin{align}
\mathbb{P}(G\text{ accepted} \; | \; G=x) & =  \mathbb{P} \left( U \leq \frac{\hat{g}_{\scriptscriptstyle{T_n|C}}(G)}{M \,  \tilde{f}_{\scriptscriptstyle{\alpha, \beta|C}}(G)} \; \bigg| \; G=x \right) \nonumber \\
& = \mathbb{P} \left( U \leq \frac{\hat{g}_{\scriptscriptstyle{T_n|C}}(x)}{M \, \tilde{f}_{\scriptscriptstyle{\alpha, \beta|C}}(x)} \right)=\frac{\hat{g}_{\scriptscriptstyle{T_n|C}}(x)}{M \,  \tilde{f}_{\scriptscriptstyle{\alpha, \beta|C}}(x)} \label{cond2}
\end{align}
since $U$ is a rv with uniform distribution over $(0,1).$
Plugging \eqref{cond2} in \eqref{cond1} and the resulting integral in \eqref{cond0}, we get
$${\mathbb P}(G \leq t \;| \; G\text{ accepted})  = M\int_{0}^{\infty}\mathds{1}_{x \leq t}\frac{\hat{g}_{\scriptscriptstyle{T_n|C}}(x)}{M \, \tilde{f}_{\scriptscriptstyle{\alpha, \beta|C}}(x)} \, \tilde{f}_{\scriptscriptstyle{\alpha, \beta|C}}(x)\, {\rm d} x = \int_{0}^{t}\hat{g}_{\scriptscriptstyle{T_n|C}}(x) \, {\rm d} x, $$
and the result follows from \eqref{leggeG}.
\end{proof}
The following is a technical lemma necessary for the subsequent proposition. The lemma gives an upper bound of the error in approximating the FPT pdf $g(t)$ with the Laguerre-Gamma expansion $\hat{g}_n(t)$ in \eqref{expansion3}
for $t \leq C.$
\begin{lem} 
\label{lemma_dist} Under the same hypotheses as Lemma \ref{lem:accrej1}, we have
$$\Big|\mathbb{P}(T_n \leq t \;|\; T_n \leq C) - \mathbb{P}(T \leq t \;|\; T \leq C)\Big|\leq\frac{1}{\mathbb P(T_{n} \leq C) }\left[\sum_{k\geq n+1}(a_{k}^{(\alpha)})^{2}\right]^{\frac12} \!\!\!\! \!\!\!\! \left(1+\frac1{\mathbb P(T\leq C)}\right)$$
with $a_k^{(\alpha)}$ given in \eqref{approximationgen2bis}.
\end{lem}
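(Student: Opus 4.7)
The plan is to bound the difference of conditional cdfs by splitting it via a standard add–and–subtract trick and then exploiting the $\mathcal L^2(\nu)$–error estimate \eqref{errorPars} via Cauchy–Schwarz. First I observe that for $t\ge C$ both conditional probabilities equal $1$, so the inequality is trivial; hence I may assume $t\le C$ and write
\begin{equation*}
\mathbb P(T_n\le t\mid T_n\le C)-\mathbb P(T\le t\mid T\le C)=\frac{\int_0^t\hat g_n(s)\,{\rm d}s}{\mathbb P(T_n\le C)}-\frac{\int_0^t g(s)\,{\rm d}s}{\mathbb P(T\le C)} .
\end{equation*}

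Next, I would add and subtract $\int_0^t g(s)\,{\rm d}s/\mathbb P(T_n\le C)$ to decompose the right-hand side into the two pieces
\begin{equation*}
\frac{\int_0^t[\hat g_n(s)-g(s)]\,{\rm d}s}{\mathbb P(T_n\le C)}
\quad\text{and}\quad
\int_0^t g(s)\,{\rm d}s\cdot\frac{\mathbb P(T\le C)-\mathbb P(T_n\le C)}{\mathbb P(T_n\le C)\,\mathbb P(T\le C)} ,
\end{equation*}
and apply the triangle inequality. The key estimate—used for both pieces—is a Cauchy–Schwarz bound against the reference measure $\nu$ with density $f_{\alpha,\beta}$: for every measurable set $E\subset(0,\infty)$,
\begin{equation*}
\left|\int_E[\hat g_n(s)-g(s)]\,{\rm d}s\right|
\le\left(\int_E\!\Big|\tfrac{\hat g_n-g}{f_{\alpha,\beta}}\Big|^{2}f_{\alpha,\beta}\,{\rm d}s\right)^{\!1/2}\!\!\left(\int_E f_{\alpha,\beta}\,{\rm d}s\right)^{\!1/2}\!\!\le\Gnorm{\tfrac{g-\hat g_n}{f_{\alpha,\beta}}}_{\!\alpha,\beta}=\Bigl[\textstyle\sum_{k\ge n+1}(a_k^{(\alpha)})^2\Bigr]^{1/2},
\end{equation*}
where the last identity is exactly \eqref{errorPars} and I used $\int_E f_{\alpha,\beta}\le 1$.

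Applying this estimate to $E=(0,t)$ controls the first piece, and applying it to $E=(0,C)$ controls the numerator $|\mathbb P(T\le C)-\mathbb P(T_n\le C)|$ appearing in the second piece. Using the trivial bound $\int_0^t g(s)\,{\rm d}s\le 1$ in the second piece then yields
\begin{equation*}
\bigl|\mathbb P(T_n\le t\mid T_n\le C)-\mathbb P(T\le t\mid T\le C)\bigr|
\le\frac{[\sum_{k\ge n+1}(a_k^{(\alpha)})^{2}]^{1/2}}{\mathbb P(T_n\le C)}
+\frac{[\sum_{k\ge n+1}(a_k^{(\alpha)})^{2}]^{1/2}}{\mathbb P(T_n\le C)\,\mathbb P(T\le C)},
\end{equation*}
which factorises to the claimed bound. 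There is no real obstacle here: the only point requiring a little care is the crude bound $\int_0^t g\le 1$ in the second piece (rather than $\le\mathbb P(T\le C)$), which is precisely what produces the factor $1+1/\mathbb P(T\le C)$ instead of a cleaner $2$.
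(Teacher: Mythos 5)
Your proof is correct and follows essentially the same route as the paper: an add--and--subtract decomposition of the conditional cdfs, the bound $\|\cdot\|_{{\mathcal L}^1(\nu)}\le\|\cdot\|_{{\mathcal L}^2(\nu)}$ via Cauchy--Schwarz against the probability measure $\nu$, and Parseval's identity \eqref{errorPars}. One point in your favour: your treatment of the second addend is the one that actually delivers the stated constant, since you bound $|\mathbb{P}(T\le C)-\mathbb{P}(T_n\le C)|$ by the same Parseval tail $\bigl[\sum_{k\ge n+1}(a_k^{(\alpha)})^2\bigr]^{1/2}$ (Cauchy--Schwarz on $E=(0,C)$), which is precisely what puts that factor in front of $1/\mathbb{P}(T\le C)$; the paper's displayed estimate \eqref{secondoaddendo} only gives the crude bound $1/\mathbb{P}(T_n\le C)+1/\mathbb{P}(T\le C)$ without the Parseval factor, and as written does not literally assemble into the claimed inequality when plugged into \eqref{bound}. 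Your observation that the difference vanishes for $t\ge C$ is also a small but genuine tidying-up that the paper skips.
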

\begin{proof}
If $ 
g_{\scriptscriptstyle{T|C}}(t) =  \mathbbm{1}_{(0,C]}(t) g(t) / {\mathbb P}(T \leq C)$  is the truncated FPT pdf for $t \leq C,$ 
and $\hat{g}_{\scriptscriptstyle{T_n|C}}(t)$ is given in \eqref{leggeG}, we have
$$
\begin{aligned}
\Big| \hat{g}_{\scriptscriptstyle{T_n|C}}(t) -g_{\scriptscriptstyle{T|C}}(t) \Big| & =\left|\frac{\hat{g}_n(t)}{\mathbb P(T_{n}\leq C)}-\frac{g(t)}{\mathbb P(T\leq C)}\right|  \\
&\leq \frac{1}{\mathbb P(T_n\leq C)}\left|\hat{g}_n(t) - g(t)\right| + g(t) \left| \frac{1}{\mathbb P(T_n\leq C)} -\frac{1}{\mathbb P(T\leq C)}\right|.
\end{aligned}
$$
Using the previous inequality, the difference between the truncated FPT cdf and its approximation by means of the Laguerre-Gamma expansion may be bounded as follows:
    \begin{eqnarray}
    &&|\mathbb{P}(T_n \leq t \;|\; T_n \leq C) - \mathbb{P}(T \leq t \;|\; T \leq C)|
    =\left|\int_{0}^{t} [ \hat{g}_{\scriptscriptstyle{T_n|C}}
    (s) - g_{\scriptscriptstyle{T|C}}(s)] \, {\rm d}s \right| \nonumber  \\
&&\leq \int_{0}^{\infty}|
\hat{g}_{\scriptscriptstyle{T_n|C}}
    (t) - g_{\scriptscriptstyle{T|C}}(t)] \, | {\rm d}t \leq \frac{1}{\mathbb P(T_{n}\leq C)}\int_{0}^{\infty}\frac{|\hat{g}_{n}(s)-g(s)|}{\varphi_{\alpha,\beta}(s)}\varphi_{\alpha,\beta}(s)\, {\rm d} s \nonumber  \\
&&+\left|\frac{1}{\mathbb P(T_{n}\leq C)}-\frac{1}{\mathbb P(T\leq C)}\right|\underbrace{\int_{0}^{\infty}g(s) \, {\rm d} s}_{=1} \nonumber \\
&&=\frac{1}{\mathbb P(T_{n}\leq C)}\left\|\frac{\hat{g}_n-g}{f_{\alpha,\beta}}\right\|_{{\mathcal L}^1(\nu)} +\left|\frac{1}{\mathbb P(T_{n}\leq C)}-\frac{1}{\mathbb P(T\leq C)}\right| \label{bound}
\end{eqnarray}
where ${\mathcal L}^1(\nu)$ is the Hilbert space of the integrable functions with respect to the measure $\nu$ having density $f_{\alpha,\beta}(t).$
Observe that 
\begin{equation}
\left\|\frac{\hat{g}_n-g}{f_{\alpha,\beta}}\right\|_{{\mathcal L}^1(\nu)}
\leq \left\|\frac{\hat{g}_n-g}{f_{\alpha,\beta}}\right\|_{{\mathcal L}^2(\nu)} 
=\left[\sum_{k\geq n+1} (a_k^{(\alpha)})^2\right]^{1/2}
\label{primooaddendo}
\end{equation}
where the last equality follows from \eqref{errorPars}.
Finally, since
\begin{equation}
\left|\frac{1}{\mathbb P(T_{n}\leq C)}-\frac{1}{\mathbb P(T\leq C)}\right| \leq \frac{1}{\mathbb P(T_{n}\leq C)} \left( 1 + \frac{\mathbb P(T_{n}\leq C)}{\mathbb P(T\leq C)} \right) 
\label{secondoaddendo}
\end{equation}
with ${\mathbb P}(T_{n}\leq C) \leq 1,$
plugging \eqref{secondoaddendo} and \eqref{primooaddendo} in \eqref{bound}
the result follows. 
\end{proof}
The following proposition gives the  theoretical justification of the proposed acceptance-rejection Algorithm \ref{algorithm1}.
\begin{prop}\label{prop:accrej2}
 Under the same hypotheses as Lemma \ref{lem:accrej1}, 
for every $\delta > 0 $, there exist a finite constant $C_\delta$ and an integer $n_\delta\in \mathbb{N}$  such that, for  $n>n_\delta$ and $C>C_\delta$ in Algorithm \ref{algorithm1}, we have
$$|\mathbb{P}(Y \leq t) - \mathbb{P}(T\leq t)| < \delta, \qquad 
 t > 0.$$
    \end{prop}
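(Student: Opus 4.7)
The plan is to combine the explicit decomposition from Lemma \ref{lem:accrej1} with the bound from Lemma \ref{lemma_dist}, after having fixed $\varepsilon$ in terms of $\delta$ so that the Vysochanskij--Petunin constant $C$ controls the right tail of $T$. First, set $\varepsilon = \delta/6$ and let $C_\delta := \mathbb{E}[T] + r(\varepsilon)$, with $r(\varepsilon)$ given by \eqref{eq:C}, so that $\mathbb{P}(T > C_\delta) \leq \varepsilon$ and in particular $|(1-\varepsilon) - \mathbb{P}(T \leq C_\delta)| \leq \varepsilon$.

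Next, I would split the analysis into the regions $\{t \leq C\}$ and $\{t > C\}$. For $t \leq C$, Lemma \ref{lem:accrej1} gives $\mathbb{P}(Y \leq t) = (1-\varepsilon)\mathbb{P}(T_n \leq t \mid T_n \leq C)$, while $\mathbb{P}(T \leq t) = \mathbb{P}(T \leq C)\,\mathbb{P}(T \leq t \mid T \leq C)$. Adding and subtracting $(1-\varepsilon)\mathbb{P}(T \leq t \mid T \leq C)$ and using the triangle inequality, one obtains
\begin{equation*}
|\mathbb{P}(Y \leq t) - \mathbb{P}(T \leq t)| \leq |\mathbb{P}(T_n \leq t \mid T_n \leq C) - \mathbb{P}(T \leq t \mid T \leq C)| + \varepsilon.
\end{equation*}
For $t > C$ one has $\mathbb{P}(T_n \leq t \mid T_n \leq C) = 1$, so Lemma \ref{lem:accrej1} gives $\mathbb{P}(Y \leq t) = \varepsilon[1 - e^{-(t-C)/\mathbb{E}[T]}] + (1-\varepsilon)$. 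Writing $\mathbb{P}(T \leq t) = \mathbb{P}(T \leq C) + \mathbb{P}(C < T \leq t)$ and bounding $\varepsilon[1 - e^{-(t-C)/\mathbb{E}[T]}] \leq \varepsilon$ and $\mathbb{P}(C < T \leq t) \leq \varepsilon$, another application of the triangle inequality yields $|\mathbb{P}(Y \leq t) - \mathbb{P}(T \leq t)| \leq 3\varepsilon = \delta/2$.

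Finally, I invoke Lemma \ref{lemma_dist} in the first region. As $n \to \infty$ the tail $\sum_{k \geq n+1}(a_k^{(\alpha)})^2 \to 0$ since the full series converges in view of \eqref{errorPars}, and by $\mathcal{L}^2(\nu)$-convergence of $\hat{g}_n$ to $g$ together with Cauchy--Schwarz one has $\mathbb{P}(T_n \leq C) \to \mathbb{P}(T \leq C) \geq 1 - \varepsilon > 0$. Hence for $n \geq n_\delta$ the Lemma \ref{lemma_dist} bound can be kept below $\delta/2$, and together with the preceding estimates this gives $|\mathbb{P}(Y \leq t) - \mathbb{P}(T \leq t)| < \delta$ uniformly in $t > 0$.

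The main obstacle is obtaining a uniform lower bound on $\mathbb{P}(T_n \leq C)$ as $n$ grows: since $\hat{g}_n$ is not guaranteed to be non-negative, this ``probability'' is really a signed quantity and could in principle fail to be bounded away from $0$. The cure is that convergence in $\mathcal{L}^1$ on $[0,C]$, deduced from \eqref{errorPars} through Cauchy--Schwarz applied to the finite reference measure $\nu$, forces $\mathbb{P}(T_n \leq C) \to \mathbb{P}(T \leq C) \geq 1-\varepsilon$, which gives the required bound for $n$ large enough and makes the Lemma \ref{lemma_dist} denominator harmless.
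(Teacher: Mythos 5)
Your proof is correct and rests on the same two pillars as the paper's: the decomposition of $\mathbb{P}(Y\leq t)$ from Lemma \ref{lem:accrej1} handles the structure, Lemma \ref{lemma_dist} supplies the $n$-dependence, and the Vysochanskij--Petunin choice of $C$ controls everything tied to $\varepsilon$. The organization differs in a way worth noting. The paper works with a single global splitting $|\mathbb{P}(Y\leq t)-\mathbb{P}(T\leq t)|\leq P_1+P_2$ valid for all $t$, then adds and subtracts $\varepsilon\,\mathbb{P}(T\leq t\,|\,T>C)$ and $(1-\varepsilon)\mathbb{P}(T\leq t\,|\,T\leq C)$ to get four pieces each bounded by $\delta/4$; in particular it invokes the exponential asymptotics of the FPT tail to control the mismatch between the truncated exponential and the true conditional tail. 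You instead split on $t\leq C$ versus $t>C$, which lets each half of the Lemma \ref{lem:accrej1} formula drop out or trivialize in its region, and on $t>C$ you get away with the crude bounds $\varepsilon[1-e^{-(t-C)/\mathbb{E}[T]}]\leq\varepsilon$ and $\mathbb{P}(C<T\leq t)\leq\varepsilon$ --- more elementary, and it shows the exponential tail behaviour is not actually needed for the stated $\delta$-closeness (it matters only for the quality of the sampler beyond $C$, not for this proposition). You also make explicit something the paper glosses over: that the denominator $\mathbb{P}(T_n\leq C)$ in Lemma \ref{lemma_dist} stays bounded away from zero, which you correctly deduce from $\|(\hat g_n-g)/f_{\alpha,\beta}\|_{\mathcal{L}^1(\nu)}\leq\|(\hat g_n-g)/f_{\alpha,\beta}\|_{\mathcal{L}^2(\nu)}\to 0$; that is a genuine improvement in rigor (though under the standing hypothesis that $\hat g_n$ is a non-negative pdf the quantity is at least a bona fide probability). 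One cosmetic point: since in Algorithm \ref{algorithm1} the constant $C$ is a decreasing function of $\varepsilon$ through \eqref{eq:C}, requiring $C>C_\delta=\mathbb{E}[T]+r(\delta/6)$ forces $\varepsilon<\delta/6$ rather than $\varepsilon=\delta/6$, so your final estimates should read $3\varepsilon<\delta/2$; this does not affect the conclusion.
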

\begin{proof}
Fix $\delta$, $n > 0$ and $\varepsilon > 0$ and let $C>0$ be the corresponding quantity calculated in Algorithm \ref{algorithm1}. 
By plugging ${\mathbb P}(T \leq t) = {\mathbb P}(T \leq C) {\mathbb P}(T \leq t | T \leq C) + {\mathbb P}(T > C){\mathbb P}(T \leq t| T > C)$ in $|\mathbb{P}(Y \leq t) - \mathbb{P}(T\leq t)|$ and using Lemma \ref{lem:accrej1}, the following bound can be recovered: 
\begin{equation*}
|{\mathbb P}(Y \leq t) - {\mathbb P}(T \leq t)| \leq P_1 + P_2 
\end{equation*}
where 
\begin{eqnarray*}
P_1 & := & |\varepsilon \mathbb{P}(\bar{T} \leq t \;|\; \bar{T}  > C) - \mathbb{P}(T > C) \mathbb{P}(T \leq t\;|\; T > C)| \\ 
P_2 & := & |(1-\varepsilon)\mathbb{P}(T_n \leq t \;|\; T_n \leq C)  - \mathbb{P}(T \leq C)\mathbb{P}(T \leq t \;|\; T \leq C)|,
\end{eqnarray*}
$\bar{T}$ is the truncated exponential rv with pdf in \eqref{leggeTn}
and 
$T_n$ is the rv with  pdf $\hat{g}_n(t).$  
Now, let us bound $P_1$. 
Adding and subtracting the quantity $\varepsilon \mathbb{P}(T \leq t\;|\; T > C)$ in $P_1$,  we get
$$
P_1 \leq P_{1,1} + P_{1,2} \quad \text{with} \quad 
\left\{\begin{array}{lll}
P_{1,1} & := &  \varepsilon |\mathbb{P}(\bar{T}  \leq t \;|\; \bar{T}  > C) - \mathbb{P}(T \leq t\;|\; T > C)|, \\
P_{1,2} & :=  & \mathbb{P}(T \leq t\;|\; T > C) |\varepsilon - \mathbb{P}(T > C)|.
\end{array}\right.
$$
Since ${\mathbb P}(T>C) \leq \varepsilon,$ thanks to Remark \ref{rem:accrej1} and to the exponential behaviour of the tails of the FPT pdf, it is always possible to find $C_{\delta, 1}$ (big enough) such that $P_{1, 1} < \frac{\delta}{4}$ and $P_{1, 2} < \frac{\delta}{4}$.
Let us apply the same strategy  to $P_2$.
Adding and subtracting the quantity $(1-\varepsilon) \mathbb{P}(T \leq t\;|\; T \leq C)$ in $P_1$,  we get
$$
P_2 \leq P_{2,1} + P_{2,2} \quad \text{with} \quad 
\left\{\begin{array}{lll}
P_{2,1} & := &  (1-\varepsilon) |\mathbb{P}(T_n \leq t \;|\; T_n \leq C) - \mathbb{P}(T \leq t \;|\; T \leq C)|, \\
P_{2,2} & :=  & \mathbb{P}(T \leq t \;|\; T \leq C) |\varepsilon -\mathbb{P}(T > C)|.
\end{array}\right.
$$
From Lemma \ref{lemma_dist}, the quantity $P_{2, 1}$ can be made sufficiently small for a suitable $n,$
since the remainder 
\eqref{errorPars} goes to zero as $n$ increases. Therefore, 
thanks to Remark \ref{rem:accrej1} and the exponential behaviour of the FPT pdf tail, we can always find $C_{\delta, 2}$ and $n_\delta$ such that $P_{2, 1} < \frac{\delta}{4}$ and $P_{2, 2} < \frac{\delta}{4}$. Setting $C_\delta = \max(C_{\delta, 1},\; C_{\delta, 2})$  concludes the proof.
\end{proof}
\begin{rem} 
    Note that although Proposition \ref{prop:accrej2} guarantees that we can always choose a constant $C$ and an order $n$ such that the cdf of $Y$ sampled in each cycle of Algorithm \ref{algorithm1} is sufficiently close to the cdf of $T$, as $C$ increases, $M = \max_{t \in [0,C]}{p_n(t)}$ will also increase or at most remain constant. However, the probability of acceptance in Algorithm \ref{algorithm1} decreases if $M$ increases, as we have already noted in Remark \ref{rem:accrej1}. Therefore, there is a trade-off between increasing $C,$ to achieve better accuracy, and the running time of the proposed algorithm.
\end{rem}
The ability of Algorithm \ref{algorithm1} to generate a satisfactory \lq\lq approximated" sample from the FPT rv is shown in Fig.~ \ref{acc_rej}. In the latter we consider case A with the standardization procedure outlined in Section \ref{choiceAB}. Hence in \eqref{leggeG}, instead of $\hat{g}_{n}(t)$, we use the approximation $\tilde{g}_{n}(t)$ of the pdf $\tilde{g}(t)$ corresponding to
$
\tilde{T}=T/\sigma_T,
$
where $\sigma_T$ is the standard deviation of $T$.
\begin{figure}[h]
\centering
\includegraphics[scale = 0.6]{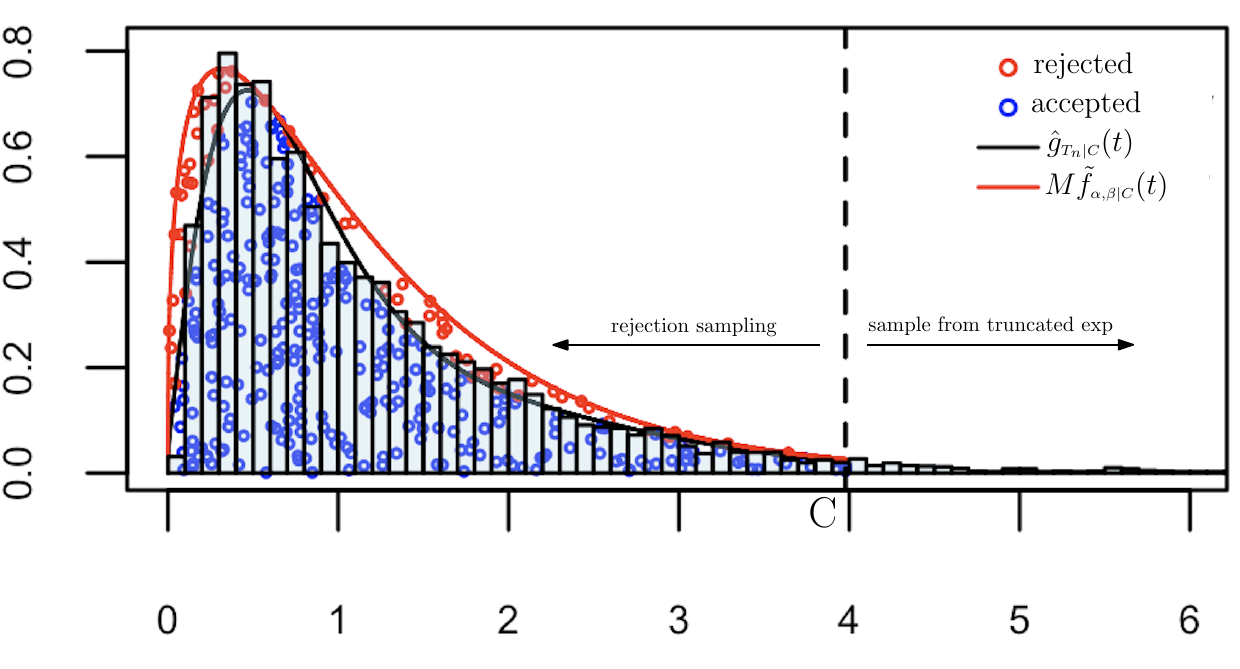}
\caption{Plot of rejected  draws (in red) and accepted  draws (in blue) for the acceptance-rejection part of Algorithm \ref{algorithm1} with $N = 300$, $\epsilon = 0.05$ and $n = 10$ applied to case A, together with the corresponding $M\tilde{f}_{\scriptscriptstyle{\alpha, \beta|C}}(t)$ and $\tilde{g}_{\scriptscriptstyle{T_n|C}}(t) = \frac{\tilde{g}_n (t)}{{\mathbb P}(T_n \leq C)}\mathbbm{1}_{(0,C]}(t)$ as in \eqref{leggeG}, with $C = 3.97, \, \alpha = 0.367$ and $\beta = 1.17$. On the same plot an histogram computed on the sample of size $N = 300$ arising from Algorithm \ref{algorithm1} with the aforementioned parameters is shown.
}\label{acc_rej}
\end{figure}

\section{Conclusions}

In this paper we develop a method 
for approximating the FPT pdf and cdf of a CIR process that relies on a series expansion involving the generalized Laguerre polynomials and the gamma pdf. The significant improvements of the method proposed recently in \cite{di2021cumulant} follow two main directions. The first one is theoretical: we detail a study of the approximation error along with considerations on the choice of the reference pdf parameters as well as some sufficient conditions  for the non-negativity of the approximant in a right hand side of the origin and in the tail. The second direction is more of a numerical nature. We propose computational strategies to overcome some numerical issues like the possible negativity of the resulting function, a feature that is undesirable in the approximation of a pdf. Methods of standardization according to considerations on dispersion measures, stopping criteria and iterative mechanisms for the choice of the best order of truncation of the series are investigated and applied to three main classes of target pdfs here considered.
Moreover, we investigate the performance of the approximation of the FPT cdf that constitutes a novelty in this framework. The developed iterative procedure is of general nature and can be applied
both starting from the model or from FPT sample data. The method, in fact, works well both using theoretical cumulants or their values estimated from the data. The discussion can be easily  extended to the case of other diffusion processes as long as it is possible to calculate their FPT moments and the reference pdfs are moment determined   \cite{stoyanov2020new}.

As a side result an acceptance-rejection-like method, that makes use of the approximation, is proposed. It  allows the generation of FPT data, although its distribution is
unknown. Its usefulness is increased by the lack of existing exact methods for simulating the sample paths of the CIR process and consequently its FPTs. Furthermore, note that the suggested acceptance-rejection like procedure could be directly extended to a wider class of pdfs (not necessarily of FPT rv) which, although unknown, may admit a series expansion of the type discussed in this paper.  

\section{Appendix}
The logarithmic (partition) polynomials $\left\{P_{k}\right\}$ are \cite{Charalambides}
$$
P_{k}\left(x_{1}, \ldots, x_{k}\right)=\sum_{j=1}^{k}(-1)^{j-1}(j-1) ! B_{k, j}\left(x_{1}, \ldots, x_{k-j+1}\right)
$$
where $\left\{B_{k, j}\right\}$ are the partial exponential Bell polynomials. For a fixed positive integer $k$ and $j=1, \ldots, k$, the $j$-th partial exponential Bell polynomial in the variables $x_{1}, x_{2}, \ldots, x_{k-j+1}$ is an homogeneous polynomial of degree $j$ given by
\begin{equation}
B_{k, j}\left(x_{1}, \ldots, x_{k-j+1}\right)=\sum \frac{k !}{\lambda_{1} ! \lambda_{2} ! \cdots \lambda_{k-j+1} !} \prod_{i=1}^{k-j+1}\left(\frac{x_{i}}{i !}\right)^{\lambda_{i}}
\label{partialBell}
\end{equation}
where the sum is taken over all sequences $\lambda_{1}, \lambda_{2}, \ldots, \lambda_{k-j+1}$ of non-negative integers such that
$$
\lambda_{1}+2 \lambda_{2}+\cdots+(k-j+1) \lambda_{k-j+1}=k, \quad \lambda_{1}+\lambda_{2}+\cdots+\lambda_{k-j+1}=j.
$$
The complete Bell (exponential) polynomials $\left\{Y_{k}\right\}$ are \cite{Charalambides}
$$
Y_{k}\left( x_{1}, \ldots, x_{k}\right) = \sum_{j=1}^{k} B_{k, j}\left(x_{1}, \ldots, x_{k-j+1}\right), \,\,\, k \geq 1
$$
where $\left\{B_{k, j}\right\}$ are the partial exponential Bell polynomials (\ref{partialBell}) and $Y_0=1.$ 

\section{Acknowledgements}
The authors would like to thank Andrea Marafante for his decisive contribution in developing and implementing the code necessary for the contents of this paper.
A special thank goes to Antonio Di Crescenzo for some inspiring discussions. 
\par
{\it Funding:} the research of E.D. and G.D. was partially supported by the MIUR-PRIN
2022 project \lq\lq Non-Markovian dynamics and non-local equations\rq \rq, 202277N5H9. The authors participate in the INdAM - GNAMPA Project, CUP E53C23001670001.

\bibliographystyle{abbrv}
\bibliography{refs}

\end{document}